
\documentclass[11pt,amsmath,amsthm,amsfonts,amssymb,amscd]{article}
\usepackage{epsfig}
\usepackage[latin1]{inputenc}
\usepackage{amsthm,amssymb}
\usepackage{amsmath}
\usepackage{graphicx}
\usepackage{psfrag}

\setlength{\textwidth}{15.2cm} \setlength{\textheight}{19cm}
\setlength\oddsidemargin{.5in} \setlength\evensidemargin{1.5in}
\font \Bbbten=msbm10 \font \Bbbsev=msbm7 \font \Bbbfiv=msbm5
\newfam\Bbbfam \textfont\Bbbfam = \Bbbten
\scriptfont\Bbbfam=\Bbbsev \scriptscriptfont\Bbbfam=\Bbbfiv
\newcommand{\R}{\mathbb R}
\newcommand{\Z}{\mathbb Z}
\newcommand{\N}{\mathbb N}
\renewcommand{\epsilon}{\varepsilon}

\newcommand{\cE}{\mathcal{E}}

\newcommand{\dist}{{\rm dist}}

\newcommand{\diam}{{\rm diam} }

\newcommand{\cita}[7]{{\sc #1, }{\it #2, }{\small #3, {\bf #4 } (#5), p.
#6-#7.}}
\newcommand{\cit}[5]{{\sc #1, }{\it #2, }{\small #3, {\bf #4 } #5.}}

\newtheorem{maintheorem}{Theorem}

\newtheorem{Teo}{Theorem}[section]
\newtheorem{Lem}[Teo]{Lemma}
\newtheorem{Cor}[Teo]{Corollary}
\newtheorem{Prop}[Teo]{Proposition}
\newtheorem{Obs}[Teo]{Remark}

\theoremstyle{definition}
\newtheorem{Df}{Definition}[section]

\title{$N$-expansive homeomorphisms on surfaces.}
\author{A. Artigue, M. J. Pacifico, J. L. Vieitez
\thanks{M. J. Pacifico is partially supported by PRONEX - Dynamical Systems, CNPq, FAPERJ,  and 
Balzan Research Project of J. Palis,  Brazil,
A. Artigue partially supported by PEDECIBA,
J. L. Vieitez is partially supported by PEDECIBA and ANII, Uruguay.}
 }

\date{\today}

\begin{document}
\maketitle
\begin{abstract}
We exploit the techniques developed in \cite{Le} to study 
$N$-expansive homeomorphisms on surfaces.
We prove that when $f$ is a $2$-expansive homeomorphism defined on a compact boundaryless
surface $M$ with nonwandering set $\Omega(f)$ being the whole of $M$ then $f$ is expansive.
This condition on the nonwandering set cannot be relaxed: we present an example of a  $2$-expansive homeomorphisms on a surface with genus $2$  with   wandering points that is not expansive.
\end{abstract}\thanks{\footnotesize{
}}


\section{Introduction}
The notion of expansiveness was introduced in the middle of the twentieth century in \cite{Ut}.
 Expansiveness is a property shared by a large class of dynamical
systems exhibiting chaotic behavior. Roughly speaking, an expansive dynamical system
is one in which two different trajectories can be distinguished by an observer with an
instrument capable of distinguishing points at a distance greater than a certain constant
$\alpha>0$ (constant of expansiveness). Examples of expansive systems are for instance
diffeomorphisms acting hyperbolically in a $f$-invariant compact subset
of a manifold $M$.
This includes for instance Anosov systems or the non-wandering set of
Axiom A diffeomorphisms. Other examples are the pseudo-Anosov homeomorphisms in
surfaces of genus greater than $1$ \cite{HT}.

This notion is very important in the context of the theory of Dynamical Systems
and nowadays there is an extensive literature about these systems.
We recommend \cite{Le,Fa,Br,Mo2,PaVi,PPV,PPSV}
and references therein for more on this.

Recently  it was introduced in \cite{Mo1} the notion of $N$-expansiveness, generalizing the usual concept of expansiveness.
Roughly speaking, this corresponds to allow at most $n$ companion orbits   for a certain given and fixed positive integer $n$. For $n=1$ this notion coincides with the usual definition of
expansive. 

In this paper we study $N$-expansiveness on compact surfaces $M$.
We exploit the techniques developed in \cite{Le} to prove that
a $2$-expansive surface homeomorphism  with nonwandering set being the whole of $M$ is expansive.
We also prove that this condition on the nonwandering set can not be relaxed:
we exhibit a $2$-expansive homeomorphism on the two torus  whose nonwandering set is a proper subset of two torus that is not expansive.

To announce in a precise way our results let us recall the notions of expansiveness we
shall deal with.
To this end consider $f\colon X\to X$ a homeomorphism of a compact metric space\footnote{In this article we only consider the case of compact spaces.}
and define for $x\in X$ and $\epsilon>0$ the set
$$\Gamma_\epsilon(x,f)= \{y\in X:
d(f^n(x),f^n(y))\leq \epsilon,\, n\in\Z\}.$$
We will simply
write $\Gamma_\epsilon(x)$ instead of $\Gamma_\epsilon(x,f)$ when
it is understood  which $f$ we refer to.

\begin{Df} \label{expansivo}
 The homeomorphism $f$ is \emph{expansive} if there is $\alpha>0$ such that $\Gamma_\alpha(x)=\{x\}$ for all $x\in X$.
 Equivalently, given $x,y\in X$, $x\neq y$, there is $n\in\Z$ such that $\dist(f^n(x),f^n(y))>\alpha$.
\end{Df}

\begin{Df}[See \cite{Mo1}] \label{N-expansivo}
Given a positive integer $n$, the homeomorphism $f$ is $n$-\emph{expansive} if there is $\alpha>0$
such that $\sharp (\Gamma_\alpha(x))\leq n$ for all $x\in X$. Here $\sharp A$ stands for the cardinal of the set $A$.
That is, at most $n$ orbits $\alpha$-\emph{shadow} the orbit of $x$ by $f$.
\end{Df}

Clearly 1-expansiveness is equivalent to expansiveness.
Our results are the following:

%

\begin{maintheorem} \label{Teo A}
If $f\colon M\to M$ is a 2-expansive homeomorphism defined on a compact surface and $\Omega(f)=M$ then $f$ is expansive.
\end{maintheorem}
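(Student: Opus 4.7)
The plan is to argue by contradiction. Suppose $f$ is $2$-expansive with constant $\alpha > 0$ but not expansive. Then for every $\epsilon > 0$ there is $x$ with $\sharp \Gamma_\epsilon(x) \geq 2$. Taking $\epsilon_n \to 0$ and extracting a limit from pairs $(x_n,y_n)$ with $y_n \in \Gamma_{\epsilon_n}(x_n)$, compactness and $2$-expansiveness produce a nonempty, closed, $f$-invariant set
\[
B = \{x \in M : \sharp \Gamma_\alpha(x) = 2\}
\]
(after possibly shrinking $\alpha$) carrying an $f$-equivariant, fixed-point-free, continuous involution $\phi \colon B \to B$ that sends $x$ to its unique companion in $\Gamma_\alpha(x)$. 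The strategy is to combine this rigid companion structure with the hypothesis $\Omega(f) = M$ to produce a topological contradiction via the techniques of \cite{Le}.

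The central step is to extend the Lewowicz construction of local stable and unstable continua to the $2$-expansive setting. For small $\epsilon$ and $x \in M$, let $C^s_\epsilon(x)$ be the connected component of $x$ in $\{y : d(f^n(x), f^n(y)) \leq \epsilon \text{ for all } n \geq 0\}$, and define $C^u_\epsilon(x)$ analogously for $n \leq 0$. The key result from \cite{Le}, in the expansive case with $\Omega(f) = M$, is that both continua are non-degenerate at every point. I would reprove this under $2$-expansiveness, replacing uniqueness of the $\alpha$-shadowing orbit in Lewowicz's variational and compactness arguments by the corresponding statement up to $\phi$. As in \cite{Le}, $\Omega(f) = M$ plays the pivotal role of upgrading semi-local estimates to pointwise non-degeneracy.

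With the local product structure of $C^s$ and $C^u$ in hand, the argument concludes by analyzing a pair $(x,y) = (x, \phi(x))$ with $x \in B$. Because the full orbits of $x$ and $y$ stay within $\alpha$ of each other, $y$ must lie in $C^s_\alpha(x) \cup C^u_\alpha(x)$, and symmetrically $x$ with respect to $y$. This forces the singular foliations $\mathcal{F}^s$ and $\mathcal{F}^u$ associated to the continua to exhibit a fold or branching defect at every point of $B$. Since $B$ is closed and $f$-invariant and $\Omega(f) = M$, these defects cannot be confined to finitely many points; combined with the Euler--Poincar\'e index constraint on a compact surface, which limits the total index of foliation singularities to $\chi(M)$, this yields the desired contradiction.

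The main obstacle is expected to be the middle step: adapting Lewowicz's local theory to the $2$-expansive case. The classical arguments invoke at several junctures the uniqueness of the $\alpha$-shadowing orbit, and each such use must be replaced by a statement that accommodates the involution $\phi$, perhaps by passing to the quotient $M/\!\!\sim$ where $x \sim \phi(x)$ on $B$. It is precisely here that $\Omega(f) = M$ becomes indispensable, in accordance with the authors' $2$-expansive, non-expansive counterexample on a genus $2$ surface with wandering points.
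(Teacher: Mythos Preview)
Your plan diverges from the paper's argument and, as written, has a genuine gap at the decisive step.

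The paper does not use the companion set $B$ or an involution $\phi$. Instead it proves, as an intermediate theorem, that under $2$-expansiveness with $\Omega(f)=M$ there are \emph{no bi-asymptotic sectors} (discs bounded by one local stable and one local unstable arc). From this and Lewowicz's techniques it deduces local product structure away from finitely many singular points. The final contradiction is then elementary: if $x\neq y$ with $\dist(f^n x,f^n y)<\alpha$ for all $n$, place $x,y$ in a product box. If they are not on a common stable or unstable arc, the point $z=s(x)\cap u(y)$ is a third companion, contradicting $2$-expansiveness. If they \emph{are} on a common unstable arc, iterate forward until that arc is long while $f^{n_0}x,f^{n_0}y$ are still close; inside a product (or generalized) box the short stable/unstable path joining them together with the long unstable arc bounds a bi-asymptotic sector, contradicting the intermediate theorem.

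The gap in your outline is the inference in step~5. From $y=\phi(x)\in\Gamma_\alpha(x)$ you get $y\in W^s_\alpha(x)\cap W^u_\alpha(x)$, not merely the union; and there is no reason $y$ must lie in the \emph{connected component} $C^s_\alpha(x)$ or $C^u_\alpha(x)$ through $x$. More importantly, even once one knows (via a product-box argument) that $y$ lies on the same local stable or unstable leaf as $x$, this does \emph{not} produce a ``fold or branching defect'' of the singular foliations at $x$. A regular point with a companion on its own leaf is still a perfectly regular point of both foliations; the paper's Section~4 shows explicitly that local product structure holds away from finitely many points regardless of the presence of companions. Hence the Euler--Poincar\'e index constraint is never violated by $B$ being large, and the proposed contradiction does not materialize. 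The actual obstruction is dynamical (creation of a third shadowing point, or of a bi-asymptotic sector), not topological via $\chi(M)$.

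A secondary issue: the asserted closedness of $B$ and continuity of $\phi$ are not automatic. A sequence of companion pairs $(x_n,y_n)$ with $y_n\in\Gamma_\alpha(x_n)\setminus\{x_n\}$ can converge to a diagonal pair $(x,x)$, so $B$ need not be closed and $\phi$ need not extend continuously. Shrinking $\alpha$ does not obviously cure this.
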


Using the classification of expansive homeomorphisms on surfaces given at \cite{Le,Hi} we
get the following

\begin{Cor} There is no $2$-expansive homeomorphisms on the sphere $S^2$
with nonwandering set the whole $S^2$.
A $2$-expansive homeomorphism on the torus with nonwandering set the whole torus is conjugated to an Anosov map
and on a surface with genus greater than $1$and the nonwandering set being the whole surface is conjugated to a pseudo Anosov map.

\end{Cor}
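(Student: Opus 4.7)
The proof is essentially a one-step reduction to the existing classification of expansive surface homeomorphisms. The plan is as follows.

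First I would apply Theorem A to upgrade the hypothesis: under the assumption that $\Omega(f)=M$, every $2$-expansive homeomorphism on a compact surface is in fact expansive. This removes $N$-expansiveness from consideration entirely and places us in the setting of classical expansive dynamics, where a complete topological classification is available.

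Next I would split into three cases according to the genus of $M$ and invoke the Lewowicz--Hiraide classification as recalled in \cite{Le,Hi}. If $M=S^2$, the sphere admits no expansive homeomorphism at all (the classical non-existence result), which together with the first step rules out $2$-expansive homeomorphisms on $S^2$ with full nonwandering set. If $M=T^2$, the classification says that an expansive homeomorphism with $\Omega(f)=T^2$ is topologically conjugate to a linear hyperbolic toral automorphism, i.e.\ to an Anosov map. If $M$ has genus $\geq 2$, the same classification identifies the expansive homeomorphisms with $\Omega(f)=M$ as precisely those topologically conjugate to pseudo-Anosov maps.

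All the difficulty is concentrated in Theorem A; once that theorem is in hand, the corollary is a purely bookkeeping statement matching each genus to the corresponding entry in the Lewowicz--Hiraide list. There is no real obstacle to overcome beyond verifying that the cited classification applies verbatim to the homeomorphism produced by Theorem A, which it does, since Theorem A delivers exactly an expansive homeomorphism with $\Omega(f)=M$, the hypothesis used in \cite{Le,Hi}.
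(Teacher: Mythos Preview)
Your proposal is correct and follows exactly the paper's own approach: the corollary is stated immediately after Theorem~A with the single sentence ``Using the classification of expansive homeomorphisms on surfaces given at \cite{Le,Hi} we get the following,'' and your write-up is a faithful expansion of that remark. One small clarification: the Lewowicz--Hiraide classification does not itself require $\Omega(f)=M$ as a hypothesis (it classifies all expansive homeomorphisms of compact surfaces), so you need not carry that assumption into the citation---$\Omega(f)=M$ is used only to invoke Theorem~A.
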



\begin{maintheorem}
 \label{Teo B}
There are $2$-expansive homeomorphisms of surfaces that are not expansive.
\end{maintheorem}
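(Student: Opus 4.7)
The plan is to construct an explicit counterexample by locally modifying a known expansive homeomorphism. I would start from a pseudo-Anosov homeomorphism $g\colon S_2 \to S_2$ on the closed orientable surface of genus $2$, which by Hiraide/Lewowicz is expansive with some constant $\alpha_0>0$. The strategy is to perform a surgery supported in an arbitrarily small neighborhood $U$ of a fixed point (or periodic point) $p_0$ of $g$ that creates a small wandering bubble while at most doubling the number of shadowing orbits. The resulting $f$ will agree with $g$ off $U$ and will be designed so that the new nonwandering set is $M\setminus W$, where $W\subset U$ is a small open set of wandering points.

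\medskip

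For the surgery, I would proceed as follows. Choose local coordinates near $p_0$ adapted to the pseudo-Anosov product (or singular product) structure, so that $g$ has local stable/unstable leaves through $p_0$. Replace $p_0$ by a short invariant arc $I\subset U$ with two endpoints $p_1,p_2$ fixed by $f$ and every interior point of $I$ wandering, sliding under $f$ from a neighborhood of $p_1$ toward $p_2$. Extend to a homeomorphism on $U$ by splitting the former stable leaf of $p_0$ into the two local stable leaves $W^s_{\mathrm{loc}}(p_1),W^s_{\mathrm{loc}}(p_2)$, and doing the same in the unstable direction, so that each original leaf of $g$ near $p_0$ is replaced by at most two parallel leaves for $f$ at mutual distance bounded by $\operatorname{diam}(U)$. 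Outside $U$ we keep $f=g$; one checks this glues to a homeomorphism of $S_2$.

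\medskip

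Non-expansiveness is then straightforward: any two distinct points $x\neq y$ in the interior of the wandering arc $I$ have orbits that stay inside $U$ for all $n\in\Z$ (both tending to $p_2$ forward and to $p_1$ backward), so $d(f^n(x),f^n(y))\le \operatorname{diam}(U)$ for every $n$, contradicting expansiveness. For $2$-expansiveness, pick $\alpha$ satisfying $\operatorname{diam}(U)<\alpha<\alpha_0/2$ and analyze $\Gamma_\alpha(x)$ in three cases. When the whole orbit of $x$ stays outside a slightly larger neighborhood of $U$, the pseudo-Anosov hyperbolicity of $g$ forces $\Gamma_\alpha(x)=\{x\}$. When the orbit visits $U$, any $\alpha$-shadowing partner must track either the $p_1$-branch or the $p_2$-branch of the doubled foliations produced by the surgery, giving at most two shadowing partners.

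\medskip

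The delicate point, and the main obstacle, is verifying this last upper bound of $2$. One has to show that the surgery does not accidentally produce triples or larger groups of $\alpha$-shadowing orbits, for instance through iterated visits of an orbit to $U$, or through interaction between the split stable and split unstable foliations away from $p_1,p_2$. This is where the design of the surgery must be carried out carefully: the splitting has to be performed in only one of the two transverse directions (say the stable one), with total local displacement bounded by $\alpha$, so that the choice of shadowing branch is recorded by a single binary symbol attached to each visit, and a uniform constant $\alpha<\alpha_0/2$ ensures that different combinatorial sequences are separated by the residual hyperbolicity. Making this precise — quantifying $\alpha$ and checking that no more than two branches are simultaneously $\alpha$-close to the orbit of $x$ — is the technical heart of the construction.
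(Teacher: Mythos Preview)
Your construction has a fatal internal contradiction. You argue non-expansiveness by observing that any two distinct interior points $x\neq y$ of the invariant arc $I$ satisfy $d(f^n(x),f^n(y))\le \operatorname{diam}(U)$ for all $n\in\Z$. But this same observation shows that $\Gamma_{\operatorname{diam}(U)}(x)\supseteq I$ for every $x\in I$, so $\sharp\,\Gamma_\alpha(x)=\infty$ whenever $\alpha\ge\operatorname{diam}(U)$. Your homeomorphism is therefore not $N$-expansive for any $N$; in fact it is not even continuum-wise expansive, since $I$ is a non-degenerate subcontinuum with $\diam(f^n(I))\le\operatorname{diam}(U)$ for all $n$. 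The later case analysis (``at most two shadowing partners'') cannot rescue this: the very mechanism you use to destroy expansiveness destroys $2$-expansiveness as well.

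The paper's construction avoids this trap by a completely different mechanism. It glues two derived-from-Anosov tori along an annulus (removing a source from one and a sink from the other, then identifying via an inversion), following the Franks--Robinson quasi-Anosov template. The wandering set is an open annulus through which every wandering orbit \emph{transits}: it enters from one side, crosses, and leaves to the other, so no non-trivial continuum stays small forever. The pair of shadowing points arises not from an invariant arc but from the geometry of the two transverse foliations on the annulus after the inversion: for each $x$ in the annulus, $W^s_\delta(x)\cap W^u_\delta(x)$ consists of exactly two points, and one checks directly that this bounds $\sharp\,\Gamma_\delta(x)$ by $2$. The hyperbolicity and isolation of $\Omega(f)$ handle the non-wandering part. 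If you want to salvage your local-surgery idea, you would need to replace the invariant arc by a local picture in which stable and unstable arcs meet in exactly two points, with the wandering orbits escaping the modified neighborhood in both time directions---which is essentially what the annulus in the paper's example accomplishes.
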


The paper is organized as follows. In Section \ref{s:n-expansive} we
consider  stable and unstable sets (see (\ref{d-stable}))  and following
\cite{Le}, we obtain that connected stable and unstable sets are arc connected.
Using these properties, we introduce the notion of a bi-asymptotic sectors
(see Definition\ref{Dfbias}). In Section \ref{s-nonbiasym} we prove that
a $2$-exapnsive surface homeomorphism with nonwandering set being
the whole of $M$ has no bi-asymptotic sectors. 
In Section \ref{Local product structure}, we consider $2$-expansive surface
homeomorphisms without wandering points, and
following \cite{le}, 
we prove that except for a finite set, every point of $M$ has local structure product.
In Section \ref{s-expansive} we prove our main result, Theorem \ref{Teo A}.
Finally, in Section \ref{sectionteoB} we present an example of a two-expansive
homeomorphism with wandering points on a bi-torus that is not expansive.

\section{Stable sets for $N$-expansive homeomorphisms of surfaces}
\label{s:n-expansive}

Let $M$ be a compact boundaryless surface and $f\colon M\to M$ a homeomorphism.
In this section we prove that $N$-expansiveness implies continuum-wise expansiveness
and obtain some usefull properties of stable and unstable arcs that will be used in the proofs.
For this, let us recall the notion of continuum-wise expansiveness \cite{Ka}.


\begin{Df} \label{cwexpansive}
A homeomorphism $f:M \to M$ is \emph{continuum-wise expansive} if there is
$\alpha > 0$ such that if $A$ is a nondegenerate subcontinuum of $M$, then there is
$n \in \Z$ such that $\diam(f^n(A)) > \alpha$, where $\diam(S) = \sup\{d(x, y):x,y \in S\}$
for any subset $S$ of $M$.
\end{Df}

\begin{Lem}
\label{l-continuum-wise}
Let $M$ be a compact metric space.
If $f\colon M\to M$ is $N$-expansive then $f$ is continuum-wise expansive
\end{Lem}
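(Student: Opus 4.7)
The plan is to argue by contradiction, taking the same constant $\alpha>0$ for continuum-wise expansiveness that witnesses $N$-expansiveness. Assume $f$ is $N$-expansive with constant $\alpha$, and suppose, toward a contradiction, that this $\alpha$ does \emph{not} work for continuum-wise expansiveness, i.e.\ there exists a nondegenerate subcontinuum $A\subseteq M$ with
\[
\diam(f^{n}(A))\leq \alpha \qquad\text{for every } n\in\Z.
\]

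From this inequality I would immediately read off that, for any fixed $x\in A$ and every $y\in A$, one has $d(f^n(x),f^n(y))\leq\diam(f^n(A))\leq\alpha$ for all $n\in\Z$. By the definition of $\Gamma_{\alpha}(x)$ this means $A\subseteq \Gamma_{\alpha}(x)$.

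To finish, I would invoke the standard fact that a nondegenerate (connected, containing at least two points) subcontinuum of a metric space is uncountable; in particular it has strictly more than $N$ points. Hence $\sharp\Gamma_{\alpha}(x)>N$, contradicting the assumption that $f$ is $N$-expansive with constant $\alpha$. Therefore no such $A$ exists, and $\alpha$ is itself a continuum-wise expansiveness constant for $f$.

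There is essentially no serious obstacle here: the argument is a direct unwrapping of the two definitions, and the only non-tautological ingredient is the observation that nondegenerate continua have more than $N$ points, which needs nothing beyond connectedness in a Hausdorff space. If one wanted to avoid even invoking uncountability, it would suffice to note that a connected set with at least two distinct points contains infinitely many points (otherwise it would be a finite, hence discrete, Hausdorff space), which is still more than $N$.
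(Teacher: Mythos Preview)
Your proof is correct and is essentially the same as the paper's: both use that a nondegenerate continuum has infinitely many points, hence cannot be contained in $\Gamma_\alpha(x)$, which has at most $N$ points. The only cosmetic difference is that you phrase it as a contradiction ($A\subseteq\Gamma_\alpha(x)$ is impossible), whereas the paper argues directly (pick $y\in A\setminus\Gamma_\alpha(x)$ and get $n$ with $\dist(f^n(x),f^n(y))>\alpha$).
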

\begin{proof}
Let $A$ be a non degenerate sub-continuum of $M$ containing a point $x$.
Since $A$ is non degenerate there are infinitely many points on $A$, and since $f$ is $N$-expansive
there exist $y\in (A\backslash \Gamma_\alpha(x))$.
Since $f$ is $N$-expansive and $y\notin\Gamma_\alpha(x)$
we conclude that there is $n\in\Z$ such that
$\dist(f^n(x),f^n(y)>\alpha$.
In particular $\diam(f^n(A))>\alpha$. Thus $f$ is continuum-wise expansive.
\end{proof}

\begin{Obs}

When $f$ is a $C^1$ diffeomorphism defined on a compact manifold, we
 say that $f$ is robustly  $N$-expansive if there is a $C^1$ neighborhood
$\mathcal{V}$ of $f$ such
that all $g \in \mathcal{V}$ is also $N$-expansive.
Thus, Lemma \ref{l-continuum-wise} implies that a robust $N$-expansive diffeomorphism
is robust continuum-wise expansive and applying the results in
 \cite{Sa} we conclude that $f$ is a quasi-Anosov diffeomorphism. It is an Anosov diffeomorphism when $M$ is a surface.

\end{Obs}

Let $(X,d)$ be a compact metric space and $f:X\to X$ a homeomorphism. For $\epsilon>0$ we define the $\epsilon$-stable set of a point $x\in X$ wrt $f$ as
\begin{equation}\label{d-stable}
W^s_\epsilon(x,f)=\{y\in X\,:\, d(f^n(x),f^n(y))\leq \epsilon\, \forall \, n\geq 0\}\, .
\end{equation}
The $\epsilon$-unstable set of $x$ wrt $f$ is defined as $W^u_\epsilon(x,f)=W^s_\epsilon(x, f^{-1})$.
When there is no confusion we omit the reference to $f$ in the $\epsilon$- stable (unstable) sets. When it is not important to specify the value of $\epsilon$ we refer to these sets as local stable (resp. unstable) sets.

The following result is proved in \cite{Ka2, JRH}.
\begin{Prop} \label{continuos}
Let $f\colon M\to M$
be a continuum-wise expansive homeomorphism with a constant of expansivity $\alpha>0$, and $M$ a
compact boundaryless surface. Then
there is $\delta>0$ such that for any point $x\in M$ it holds that
$W^s_\epsilon(x)$ contains a non-trivial subcontinuum $D(x)$, such
that $x\in D(x)$ and $\diam(D(x))\geq\delta$.
Analogously, there is a non trivial subcontinuum $C(x)\subset W^u_\epsilon(x)$ with $x\in C(x)$ and $\diam(C(x))\geq\delta$.
In particular there are not Lyapunov stable points for $f:M\to M$.

\end{Prop}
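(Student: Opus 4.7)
The plan is to prove the existence of non-trivial local stable and unstable continua at every point with a uniform lower bound on their diameters, by invoking the structure theory of continuum-wise expansive homeomorphisms developed by Kato \cite{Ka2} and Hiraide \cite{JRH}, and then to deduce the absence of Lyapunov stable points from the existence of the non-trivial unstable continuum at every point.

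Fix $\epsilon\in(0,\alpha/2)$, where $\alpha$ is the continuum-wise expansivity constant provided by Lemma \ref{l-continuum-wise}. The first step is to apply Kato's theorem from \cite{Ka2} for continuum-wise expansive homeomorphisms on non-degenerate Peano continua: this produces a uniform $\delta>0$ such that at every $x\in M$ at least one of $W^s_\epsilon(x)$, $W^u_\epsilon(x)$ contains a non-trivial subcontinuum through $x$ of diameter at least $\delta$. Kato's argument takes place in the hyperspace $2^M$ with the Hausdorff metric; it uses the upper-semicontinuity of the graphs of $W^s_\epsilon$ and $W^u_\epsilon$ (closed conditions), a boundary-bumping construction of maximal subcontinua, and a direct appeal to the c.w.-expansivity constant to rule out simultaneous pointwise degeneracy. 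Uniformity of $\delta$ is automatic because all constants involved depend only on $\alpha$ and $\diam(M)$. In the surface setting Hiraide \cite{JRH} upgrades ``stable \emph{or} unstable'' to ``stable \emph{and} unstable'': assuming, say, that $W^s_\epsilon(x_0)$ is degenerate, the planar topology around $x_0$ together with the continuity of $W^u_\epsilon$ in the basepoint forces the local unstable sets to sweep out a two-dimensional neighborhood, and a suitable transversal continuum then has all iterates trapped in diameter, contradicting c.w.-expansivity. This upgrade is the main obstacle of the proof, since it genuinely uses that $M$ is two-dimensional and relies on planar separation arguments that have no higher-dimensional analogue.

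For the absence of Lyapunov stable points I would argue by contradiction. Suppose $x\in M$ is Lyapunov stable and fix $\eta\in(0,\alpha/2)$; by definition of Lyapunov stability choose $\rho\in(0,\delta/2)$ so that $d(x,z)<\rho$ implies $d(f^n(x),f^n(z))<\eta$ for all $n\geq 0$. Since $\diam(C(x))\geq\delta>\rho$, the boundary-bumping theorem applied to the continuum $C(x)$ and the open ball $B(x,\rho)$ yields a non-trivial subcontinuum $K\subset C(x)\cap\overline{B(x,\rho)}$ containing $x$ and meeting $\partial B(x,\rho)$. Then Lyapunov stability gives $\diam(f^n(K))\leq 2\eta<\alpha$ for $n\geq 0$, while $K\subset W^u_\epsilon(x)$ gives $\diam(f^n(K))\leq 2\epsilon<\alpha$ for $n\leq 0$. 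No iterate of $K$ exceeds $\alpha$ in diameter, contradicting continuum-wise expansivity of $f$.
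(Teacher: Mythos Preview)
Your approach matches the paper's: the paper does not supply its own proof of this proposition but simply states that ``the following result is proved in \cite{Ka2, JRH}'' and moves on. Your proposal is essentially an expanded version of that citation, sketching what those references do and supplying a direct argument for the Lyapunov-stability consequence; that last argument is correct.

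Two small remarks. First, \cite{JRH} is Rodriguez Hertz, not Hiraide; you have misattributed the reference. Second, the hypothesis of the proposition already assumes $f$ is continuum-wise expansive with constant $\alpha$, so invoking Lemma~\ref{l-continuum-wise} is unnecessary here (that lemma is what feeds the $N$-expansive case into this proposition, not part of its proof).
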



We use these continua $C(x)$ and $D(x)$ given at Proposition \ref{continuos} to analyze the structure of $N$-expansive homeomorphisms of surfaces.

Given $x\in M$ and $\delta>0$ we let $B(x,\delta)=\{y\in M\,:\, d(x,y)<\delta\}$.
\begin{Prop} Let $M$ be a surface and $f:M\to M$ be a $N$-expansive homeomorphism and
let $0<\epsilon<\alpha/4$ be fixed.
For all $x\in M$, $C(x)\subset W^u_\epsilon(x)$ and $D(x)\subset W^s_\epsilon(x)$ are locally connected.
\end{Prop}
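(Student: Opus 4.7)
Our plan is to establish the statement for $D(x)$; the assertion for $C(x)$ will then follow by applying the same result to $f^{-1}$, using that $C(x,f)=D(x,f^{-1})$.

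We will argue by contradiction. Suppose $D(x)$ fails to be locally connected at some point $y\in D(x)$. By the classical Kuratowski--Whyburn characterization of local (non-)connectedness in continua (existence of a \emph{convergence continuum}), we will produce a nondegenerate subcontinuum $K\subset D(x)$ with $y\in K$ and a sequence of subcontinua $K_n\subset D(x)$ satisfying $K_n\cap K=\emptyset$, $K_n\to K$ in the Hausdorff metric, and $\diam(K_n)\geq \eta>0$ for some $\eta$ independent of $n$.

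Since $K$ and each $K_n$ lie in $D(x)\subset W^s_\epsilon(x)$ with $\epsilon<\alpha/4$, every forward iterate will satisfy $\diam(f^k(K_n)),\,\diam(f^k(K))\leq 2\epsilon<\alpha/2$ for all $k\geq 0$. By Lemma \ref{l-continuum-wise}, $f$ is continuum-wise expansive with constant $\alpha$, so for each $n$ there is a least $m_n>0$ with $\diam(f^{-m_n}(K_n))>\alpha/2$, while $\diam(f^{-i}(K_n))\leq \alpha/2$ for $0\leq i<m_n$. Invoking compactness of the hyperspace of closed subsets of $M$ in the Hausdorff metric (Blaschke selection), we will pass to a subsequence for which $f^{-m_n}(K_n)$ converges to a continuum $K'$ with $\diam(K')\geq \alpha/2$, and simultaneously $f^{-m_n}(x)\to z$ for some $z\in M$.

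The final step, and the main obstacle, is to turn $K'$ into a violation of $N$-expansiveness at $z$. Using the small forward iterates of $K_n$ together with the convergence $w_n\to w$ for $w_n\in f^{-m_n}(K_n)$, one will show that every $w\in K'$ satisfies $d(f^j(w),f^j(z))\leq \alpha$ for every $j\geq 0$; for negative $j$ the same bound will follow from the minimality of $m_n$, which keeps backward iterates of $K_n$ of diameter at most $\alpha/2$ up to time $m_n$. Thus $K'\subset \Gamma_\alpha(z)$. The delicate point—where the hypotheses that $M$ is a surface and that $K_n\cap K=\emptyset$ enter decisively—is to exploit the two-dimensional topology to extract from the uncountable continuum $K'$ more than $N$ genuinely distinct orbits (rather than allowing $K'$ to collapse onto a bounded number of orbits). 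Once $\sharp\Gamma_\alpha(z)>N$ is produced, the contradiction with $N$-expansiveness closes the argument.
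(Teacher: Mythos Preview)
Your argument has a genuine gap at the step you flag as ``for negative $j$''. Write $L_n=f^{-m_n}(K_n)$. Minimality of $m_n$ says $\diam(f^{-i}(K_n))\leq\alpha/2$ for $0\le i<m_n$; translating, this controls $\diam(f^j(L_n))$ only for $j\in\{1,\dots,m_n\}$, i.e.\ \emph{positive} $j$. For $j<0$ you are looking at $f^{-(m_n+|j|)}(K_n)$, which lies \emph{beyond} the first exit time in the backward direction, and nothing prevents those diameters from being large. Consequently the limit $K'$ is only known to satisfy $\diam(f^j(K'))\le\alpha/2$ for all $j\ge 1$; in other words $K'$ is a nondegenerate continuum contained in some local stable set. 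That is no contradiction at all --- such continua exist abundantly (Proposition~\ref{continuos}) --- so you never reach $K'\subset\Gamma_\alpha(z)$, and the closing remark about extracting $N+1$ orbits from $K'$ has nothing to act on. (Note too that if one \emph{did} have $K'\subset\Gamma_\alpha(z)$, any $N+1$ distinct points of the uncountable $K'$ would finish immediately; no surface topology would be needed there, which is another sign that the argument as written is not using the key hypotheses.)

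The paper's proof takes a different route that genuinely exploits the two--dimensionality and the disjointness $X_k\cap X_\infty=\emptyset$ of the convergence continua. Working, say, with $C(x)\subset W^u_\epsilon(x)$: the $X_k$ locally separate a disk neighborhood, and one looks at the \emph{transverse} stable continua $D(y_k)$ through points $y_k\in X_k$. If $D(y_k)$ met another $X_j$ (or $X_\infty$) in a point $z_k\neq y_k$, then $z_k\in C(x)\cap D(y_k)$ and one checks $z_k\in\Gamma_\alpha(x)$, so $N$-expansiveness forces $D(y_k)$ to be trapped between consecutive $X_k$'s for large $k$. Since the gap between consecutive $X_k$'s collapses onto $X_\infty$, the Hausdorff limit of the $D(y_k)$ yields a nondegenerate subcontinuum $E\subset X_\infty$ that lies simultaneously in a local stable set and in $C(x)\subset W^u_\epsilon(x)$, contradicting continuum-wise expansiveness. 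The contradiction is with cw-expansiveness, obtained via the stable/unstable interplay and planar separation, rather than directly with the cardinality bound in $N$-expansiveness.
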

\begin{proof}
Arguing by contradiction assume that $C(x)$ is not locally connected.
Then for a fixed $x\in M$ we may choose $\delta>0$ such that $\delta<\alpha$  and in $B(x,\delta)$ we have that no point $\alpha$-shadows the orbit of $x$.
Let us restrict $C(x)$ to the connected component of $C(x)\cap B(x,\delta)$ containing $x$. We continue to call $C(x)$ to this connected component. If it were not true that $C(x)$ is not locally connected then we would have a neighborhood
 $V$ of $x$ such that there is a
sequence of continua $X_k$ converging in the Hausdorff metric to a continuum
 $X_{\infty}$ with $k\to\infty$ such that $\left(X_\infty\cup_{k\in\N} X_k\right)\subset C(x)\cap V$ and such that
 $X_k\cap X_{\infty}=\emptyset$ and $X_k\cap X_j=\emptyset$ for all $j,k\in\N$ and all $j\neq k$, see (see\cite[Chapter IV, \S 2]{Wi}). Choose a point $y\in X_k$ with $k>N$ and such that the distance between $X_{k}$ and its neighbors goes to zero with $k\to\infty$ (see \cite[Chapter IV]{Wi}). Observe that $X_k$ and also $X_\infty$ separate a neighborhood $U\subset V$ since $X_k$ is not connected but is part of a connected set as is $C(x)$. We may assume without loss of generality that $U=V$.
 Let $y_k\in X_k\subset C(x)$. Then $D(y_k)$ cannot intersect $X_\infty\bigcup\cup_j X_j$  because if 
 $D(y_k)$ has another point $z_k$ in common with $X_\infty\bigcup\cup_j X_j$, different from $y_k$, then $\dist(f^n(x),f^n(z_k))<3\epsilon<\alpha$.
 Thus $z_k$ $\alpha$-shadows the orbit of $x$. Since $k>N$ this leads to a contradiction.
 That means that $D(y_k)\cap V$ is in between $X_k$ and $X_{k+1}$ or in between $X_{k-1}$ and $X_k$. Letting $k\to\infty$ we find that  $X_\infty$ contains a non trivial sub-continuum $E$ such that is contained in both $W^s_\epsilon(x)$ and $W^u_\epsilon(x)$ contradicting continuum-wise expansiveness.
\end{proof}

Similarly we can prove that $D(x)$ is locally connected. It follows that $D(x)$ and $C(x)$ are both arc-connected,
see for instance \cite[Chapter Six, \S  II]{Ku}.
As $M$ is a compact surface there is $\delta'>0$ such that for any $x\in M$
  $B(x,\delta')$ is homeomorphic to a disk in $\R^2$.

Let $\delta>0$ be as in Proposition \ref{continuos} and assume, without loss, that $\delta<\delta'$  where $\delta'$ is as above.
Given $x\in M$ consider the family $\mathcal{A}^s=\mathcal{A}^s(x,\delta)$ of all arcs contained in $C(x)\subset W^s_\epsilon(x)$ with origin at
$x$ and endpoint at $\partial B(x,\delta)$. In a similar way we define $\mathcal{A}^u=\mathcal{A}^u(x,\delta)$.
 \begin{Lem}
If two arcs $\gamma,\gamma'\in \mathcal{A}^s$ meet at a point $y$ different from $x$ then they contain an arc through $x$ joining $x$ with $y$ contained in $\gamma\cap \gamma'$.
\end{Lem}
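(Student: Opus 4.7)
The plan is to argue by contradiction: if $\gamma\cap\gamma'$ contains no arc from $x$ to $y$, I would construct a Jordan curve of stable points whose bounded region contains an unstable subcontinuum whose iterates remain bounded for all time, contradicting continuum-wise expansivity (Lemma~\ref{l-continuum-wise}).

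First, let $\gamma_1\subset\gamma$ and $\gamma_1'\subset\gamma'$ denote the sub-arcs from $x$ to $y$. Under the contradictory hypothesis $\gamma_1\neq\gamma_1'$, a standard topological argument (taking a maximal open subinterval of $\gamma_1$ whose interior avoids $\gamma_1'$) produces sub-arcs $\beta\subset\gamma_1$ and $\beta'\subset\gamma_1'$ with common endpoints $p_1\neq p_2$ and with $\beta\cap\beta'=\{p_1,p_2\}$. Then $J=\beta\cup\beta'$ is a Jordan curve contained in $W^s_\epsilon(x)\cap B(x,\delta)$. Since $B(x,\delta)$ is homeomorphic to a planar disk, $J$ bounds a closed topological disk $R\subset B(x,\delta)$.

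Next, I would establish that $R\subset W^s_\epsilon(x)$. Since $J\subset W^s_\epsilon(x)$, we have $f^n(J)\subset\bar B_\epsilon(f^n(x))$ for each $n\geq 0$. The image $f^n(R)$ is a topological disk bounded by $f^n(J)$, and the crucial topological point is to rule out that $f^n(R)$ becomes the ``large'' side of $f^n(J)$. When $M$ has genus at least one this follows because the small side is a disk while the large side has positive genus, so a homeomorphism cannot swap them; for the sphere one adapts this with a reference-point argument, choosing $q\notin B(x,\delta)$ whose forward orbit stays outside $\bar B_\epsilon(f^n(x))$ for all $n\geq 0$, so that $f^n(q)$ lies in the large side of $f^n(J)$ and hence $f^n(R)$ lies in the small side. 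In either case $f^n(R)\subset\bar B_\epsilon(f^n(x))$ for all $n\geq 0$, so $R\subset W^s_\epsilon(x)$.

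For the contradiction, I would fix an interior point $z_0\in R$ and use the fact that the unstable continuum $C(z_0)\subset W^u_\epsilon(z_0)$ is non-trivial and arc-connected. Let $K$ be the connected component of $C(z_0)\cap R$ containing $z_0$; since $z_0$ is interior to $R$ and $C(z_0)$ carries an arc emanating from $z_0$, $K$ is a non-trivial subcontinuum. From $K\subset W^u_\epsilon(z_0)$ we get $\diam(f^n(K))\leq 2\epsilon$ for $n\leq 0$, and from $K\subset R\subset W^s_\epsilon(x)$ we get $f^n(K)\subset f^n(R)\subset\bar B_\epsilon(f^n(x))$ and hence $\diam(f^n(K))\leq 2\epsilon$ for $n\geq 0$. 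Since $\epsilon<\alpha/4$, this yields $\diam(f^n(K))<\alpha$ for every $n\in\Z$, contradicting continuum-wise expansivity. The main obstacle will be the topological bookkeeping in the middle step: making precise that $f^n(R)$ cannot flip to the large side of $f^n(J)$ under forward iteration, which is where the genus hypothesis or the reference-point choice really bites.
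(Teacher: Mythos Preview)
Your argument is correct in outline and shares the paper's core idea: produce a region bounded by stable arcs, show its forward iterates stay small, and contradict continuum-wise expansiveness. The paper's execution is considerably shorter, however. Rather than extracting a Jordan curve by hand, it observes that if $\gamma\cap\gamma'$ contains no arc from $x$ to $y$ then $\gamma\cap\gamma'$ is disconnected while $\gamma\cup\gamma'$ is connected, and invokes the Janiszewski property of the plane to conclude that $\gamma\cup\gamma'$ separates; a bounded complementary region $U$ then has $\diam f^n(\partial U)\to 0$, so every point of $U$ is Lyapunov stable, which contradicts Proposition~\ref{continuos} directly. In particular the paper does not pass through an unstable continuum $C(z_0)$ at all---it uses the ``no Lyapunov stable points'' clause of Proposition~\ref{continuos} rather than the existence of $C(z_0)$---and it leaves the ``no flipping'' issue entirely implicit.

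One caution on your version: the reference-point argument you propose for $M=S^2$ requires a point $q\notin R$ with $q\notin W^s_\epsilon(x)$, and you should say why such a $q$ exists (it does, since $W^s_\epsilon(x)\subset \bar B_\epsilon(x)\subsetneq M$ for $\epsilon$ small, but this deserves a sentence). A cleaner and case-free way to handle the flipping issue is to note that $\diam f^n(J)\le 2\epsilon$ for all $n\ge 0$, so each $f^n(J)$ lies in a coordinate ball and bounds a unique small disk there; since $R$ is the small disk at time $0$, uniform continuity of $f$ forces $f^n(R)$ to remain the small disk for every $n$.
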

\begin{proof}
Indeed, if there is no such an arc then $\gamma\cap\gamma'$ is a disconnected set. Since $x\in\gamma$ and $x\in\gamma'$
we have that $\gamma\cup\gamma'$ is connected. Since $\R^2$ is a Janiszewski space (\cite[Volume 2]{Ku})
it holds that $\gamma\cup\gamma'$ separates $\R^2$, see \cite[Chapter Ten, \S 61]{Ku}.
Let $U$ be a bounded region of $\R^2\backslash (\gamma\cup\gamma')$.
By forward iteration we obtain that $\diam(f^n(\gamma\cup\gamma'))\to 0$ when $n\to\infty$.
Thus every point of $U$ is Lyapunov stable, contradicting that $f$ is continuum-wise expansive (see Proposition \ref{continuos}).
\end{proof}

Following Lewowicz we introduce the equivalence relation among the stable (unstable) arcs $\gamma \ni x$:
If $\gamma,\gamma'\in \mathcal{A}^s(x,\delta)$ we say that $\gamma\sim \gamma'$ if $\gamma\cap\gamma'$ strictly contains $x$.

\begin{Lem}
\label{finitapatas}
 For any point $x\in M$ there are finitely many equivalence classes of arcs $\gamma\in \mathcal{A}^s(x,\delta)$. Similarly for $\mathcal{A}^u(x,\delta)$.

\end{Lem}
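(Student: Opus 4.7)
The plan is to argue by contradiction. Suppose $\mathcal{A}^s(x,\delta)$ contains infinitely many equivalence classes. By the previous Lemma I can pick pairwise inequivalent representatives $\gamma_1,\gamma_2,\ldots$ with $\gamma_k\cap\gamma_l=\{x\}$ for $k\neq l$. Each $\gamma_k$ is an arc from $x$ to an endpoint $z_k\in\partial B(x,\delta)$ inside the topological disk $B(x,\delta)$; since $\partial B(x,\delta)$ is a topological circle, the $z_k$ inherit a cyclic order. Passing to a subsequence, I may assume $z_k\to z_\infty$ monotonically in that order, so the short sub-arc $J_k\subset\partial B(x,\delta)$ joining $z_k$ to $z_{k+1}$ satisfies $\diam(J_k)\to 0$. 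Then the Jordan curve $\gamma_k\cup\gamma_{k+1}\cup J_k$ bounds a topological disk $R_k\subset\overline{B(x,\delta)}$ whose two ``stable sides'' become Hausdorff-close. Passing to a further subsequence, $\gamma_k$ Hausdorff-converges to a continuum $\Gamma\subset W^s_\epsilon(x)$ (by closedness of the local stable set), and the regions $\overline{R_k}$ collapse onto $\Gamma$.

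For each large $k$ I pick an interior point $y_k\in R_k$ and consider the unstable continuum $C(y_k)\subset W^u_\epsilon(y_k)$ supplied by Proposition \ref{continuos}, with $\diam(C(y_k))\geq\delta$. Let $\sigma_k$ be the connected component of $C(y_k)\cap\overline{R_k}$ containing $y_k$. With a careful choice of $y_k$ (for instance, at maximal distance from $\partial R_k$ along the length direction of the wedge), the diameters $\diam(\sigma_k)$ stay uniformly bounded below as $k\to\infty$: either $C(y_k)\subset\overline{R_k}$ so that $\sigma_k=C(y_k)$ has diameter $\geq\delta$, or $\sigma_k$ is a sub-arc of $C(y_k)$ reaching $\partial R_k$ and hence has length comparable to the wedge's length. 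Passing to a subsequence, $\sigma_k$ Hausdorff-converges to a non-degenerate continuum $E\subset\lim\overline{R_k}\subset\Gamma\subset W^s_\epsilon(x)$, which by closedness of the local unstable set is also contained in $W^u_\epsilon(y_\infty)$, where $y_\infty=\lim y_k\in\Gamma\subset W^s_\epsilon(x)$.

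For $z,z'\in E$ the triangle inequality via $x$ gives $d(f^n(z),f^n(z'))\leq 2\epsilon$ for $n\geq 0$, while via $y_\infty$ it gives $d(f^n(z),f^n(z'))\leq 2\epsilon$ for $n\leq 0$. Since $2\epsilon<\alpha/2<\alpha$, the non-degenerate continuum $E$ has every forward and backward iterate of diameter at most $\alpha$, contradicting continuum-wise expansiveness (Lemma \ref{l-continuum-wise}). The analogous statement for $\mathcal{A}^u(x,\delta)$ follows by applying the same argument to $f^{-1}$. The main obstacle is the uniform lower bound on $\diam(\sigma_k)$: the narrow-wedge geometry of $R_k$ must be combined with the uniform diameter bound $\diam(C(y_k))\geq\delta$ to ensure that the portion of the unstable continuum inside $\overline{R_k}$ does not degenerate in the limit. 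This planar-dynamical analysis -- turning closeness of the stable sides into a genuine non-degenerate stable-unstable continuum -- is the delicate technical core of the proof, and is precisely where continuum-wise expansiveness (rather than mere $N$-expansiveness) enters decisively.
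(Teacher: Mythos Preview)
Your overall setup (inequivalent representatives $\gamma_k$ meeting only at $x$, wedges $R_k$ collapsing onto a limiting stable continuum $\Gamma$) matches the paper's. The gap is exactly where you flag it, and your suggested fix does not close it. When the unstable continuum $C(y_k)$ exits $\overline{R_k}$ through one of the \emph{stable} sides $\gamma_k$ or $\gamma_{k+1}$, the component $\sigma_k$ may have diameter comparable to the \emph{width} of the wedge, which tends to zero; placing $y_k$ ``at maximal distance along the length direction'' does nothing to prevent $C(y_k)$ from running transversally and hitting a stable side at once. Thus your case (b), ``$\sigma_k$ reaches $\partial R_k$ and hence has length comparable to the wedge's length,'' is unjustified precisely in the case that matters, and the limit continuum $E$ may well be a single point.

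The paper closes this gap by a different mechanism, and your last sentence has it backwards: this is where $N$-expansiveness --- not merely continuum-wise expansiveness --- is used directly. The paper first disposes of the case in which infinitely many wedges contain an unstable sub-continuum of $W^u_\epsilon(x)$ reaching a fixed radius $r_2$ (these converge to a nontrivial continuum lying in both $W^s_\epsilon(x)$ and $W^u_\epsilon(x)$, contradicting continuum-wise expansiveness). In the remaining case, for large $k$ every unstable continuum through a point of the wedge between $\beta_{n_{k-1}}$ and $\beta_{n_k}$ must hit one of the two stable sides; a connectedness argument on a short transversal arc then yields a single $C(z)$ meeting \emph{both} $\beta_{n_{k-1}}$ and $\beta_{n_k}$. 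Since the subsequence is chosen with $n_k-n_{k-1}\to\infty$, this unstable continuum crosses arbitrarily many of the intermediate stable arcs $\beta_n$, producing more than $N$ distinct points in $W^s_\epsilon(x)\cap W^u_\epsilon(z)$ whose orbits pairwise $\alpha$-shadow each other --- a direct contradiction to $N$-expansiveness. So the very scenario your argument leaves open, the unstable continuum escaping sideways through a stable arc, is handled by counting crossings rather than by extracting a limiting continuum.
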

\begin{proof}
Assume that there are infinitely many equivalence classes of arcs with origin in $x$ belonging to $W^s_\epsilon(x)$.
Let $\delta>0$ such that if $y\in B(x,\delta)\backslash \{x\}$ then the $f$-orbit of $y$ does not $\alpha$-shadow the orbit of $x$.
If for some $r_1>0$ there were infinitely many equivalence classes of arcs joining $x$ with $\partial B(x,r_1)$ then there will be a
subsequence $\{\beta_n\}$ of arcs, converging in the Hausdorff metric to a continuum $D_1$ contained in $W^s_\epsilon(x)$
joining $x$ with $\partial B(x,r_1)$. If there were infinitely many sub-continua $h_{n_k}\subset W^u_\epsilon(x)$ such that
$h_{n_k}$ is between $\beta_{n_{k-1}}$ and $\beta_{n_k}$ and joins $x$ with $\partial B(x,r_2)$ for some positive $r_1\geq r_2>0$ we have that also $h_{n_k}$ converges to a nontrivial subcontinuum $D_2\subset W^u_\epsilon(x)$. But $D_2\subset D_1$ and this contradicts continuum-wise expansiveness.
Thus, the subsequence $\{h_{n_k}\}$ cannot exist. That means that given $r_2>0$ there is $k_0$ such that if $k>k_0$ then no subcontinuum $h_{n_k}\subset W^u_\epsilon(x)$ and between $\beta_{n_{k-1}}$ and $\beta_{n_k}$ can intersect $\partial B(x,r_2)$. Observe that we may assume that $n_k-n_{k-1}\to\infty$ when $k\to\infty$.

Let $\gamma$ be a small arc in $ B(x,r_2)$ joining $\beta_{n_k}$ with $\beta_{n_{k-1}}$.
There is $\gamma $ such that for $z\in\gamma$ there is $C(z)\subset W^u_\epsilon(z)$
such that intersects either $\beta_{n_k}$ or $\beta_{n_{k-1}}$.
Otherwise we can find $h_{n_k}\subset W^u_\epsilon(x)$
between $\beta_{n_{k-1}}$ and $\beta_{n_k}$.
The subset of points of $\gamma $ such that $C(z)$ intersects $\beta_{n_k}$ is closed as
it is the subset of $\gamma$ of points $z$ such that $C(z)$ intersects $\beta_{n_{k-1}}$.
By connectedness of $\gamma$ there is $C(z)$ that cut both $\beta_{n_k}$ and $\beta_{n_{k-1}}$.
Therefore it cuts arbitrarily large number of $\beta_n$ contradicting $N$-expansiveness.
This proves that there are only a finite number of equivalence classes
\end{proof}

\subsection{Bi-asymptotic sectors}
\label{s-biasym}

For an $N$-expansive homeomorphism, $N\geq 2$, it is possible that a
local stable arc intersects twice a local unstable arc. In this case we introduce the following
\begin{Df}
\label{Dfbias}
A disc bounded by the union of a stable and an unstable arc is called
a \emph{bi-asymptotic sector}.
\end{Df}


Unlike in the expansive case, \cite[Lemma 3.2]{Le}, for a $N$-expansive homeomorphism $f$, $N\geq 2$, we cannot ensure that between two stable arcs in $\mathcal{A}^s(x,\delta)$ there is an unstable arc in $\mathcal{A}^u(x,\delta)$. We exhibit in section \ref{sectionteoB} a homeomorphism such that the mentioned property does not hold. Instead, for an $N$-expansive homeomorphism, we have the following alternative.

\begin{Lem}
\label{inestsepara}
Consider a small disc $\mathcal{D}\subset M$, a stable arc $\beta$ separating $\mathcal{D}$ and $x\in \beta$.
Denote by $U$ a component of $\mathcal{D}\setminus \beta$.
Then one of the following holds:
\begin{enumerate}
 \item there is an unstable arc in $U$ from $x$ to $\partial \mathcal{D}$  or
\item there is a bi-asymptotic sector in $U$.
\end{enumerate}
\end{Lem}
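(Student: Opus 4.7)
The strategy is to produce an unstable arc emanating from $x$ and reaching $\partial\mathcal D$ via Proposition \ref{continuos} together with the arc-connectedness of local unstable sets, then track how that arc sits with respect to the separating stable arc $\beta$. Since $\mathcal D$ is small we may assume $\diam\mathcal D<\delta$, where $\delta$ is the constant from Proposition \ref{continuos}. Then $C(x)\subset W^u_\epsilon(x)$ has diameter at least $\delta$, so $C(x)\not\subset\mathcal D$; being arc-connected, $C(x)$ contains an unstable arc $\gamma\colon[0,1]\to\overline{\mathcal D}$ with $\gamma(0)=x$ and $\gamma(1)\in\partial\mathcal D$. Note that $\gamma$ cannot share a non-degenerate sub-arc with $\beta$: such a sub-arc would lie in both $W^s_{2\epsilon}(x)$ (since $\beta\subset W^s_{2\epsilon}(x)$) and $W^u_\epsilon(x)$, producing infinitely many orbits that $2\epsilon$-shadow the orbit of $x$ and contradicting $N$-expansiveness.

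Suppose first that some such $\gamma$ enters $U$ immediately after $x$. Set $t^{\ast}=\inf\{t\in(0,1]:\gamma(t)\in\beta\}$, so $\gamma((0,t^{\ast}))\subset U$. If $t^{\ast}=1$, then $\gamma\subset\overline U$ joins $x$ to $\partial\mathcal D$ and option (1) holds. If $t^{\ast}<1$, put $y=\gamma(t^{\ast})\in\beta$; since $\gamma$ is simple, $y\neq x$, so $\gamma|_{[0,t^{\ast}]}$ together with the sub-arc of $\beta$ joining $x$ to $y$ is a simple closed curve lying in $\overline U$. By the Jordan curve theorem it bounds a disc contained in $U$, and the boundary of that disc is the union of an unstable and a stable arc, which is exactly the definition of a bi-asymptotic sector. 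Hence option (2).

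There remains the case where no unstable arc from $x$ enters $U$; by Lemma \ref{finitapatas} this means that each of the finitely many equivalence classes of unstable arcs at $x$ has its initial segment in the other component $U'$. Here the bi-asymptotic sector in $U$ has to be exhibited without making $x$ a corner. For $w\in U$ close to $x$, the arc-connected continuum $C(w)\subset W^u_\epsilon(w)$ still has diameter at least $\delta$, and I would argue that, since every branch of $C(x)$ at $x$ lies on the $U'$ side of $\beta$, a semicontinuity argument forces two distinct unstable sub-arcs of $C(w)$ emanating from $w$ to cross $\beta$ near $x$; the piece of $C(w)$ between those two crossings, joined with the sub-arc of $\beta$ between them, bounds the desired bi-asymptotic sector in $U$. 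The main obstacle is precisely this last step: upper semicontinuity of $w\mapsto C(w)$ is direct from the definition, but lower semicontinuity is not, and one must carefully exclude the degenerate possibility that $C(w)$ stays entirely inside $\overline U$ for all $w\in U$ near $x$.
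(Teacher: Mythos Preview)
Your self-diagnosed gap in the last paragraph is real, and it is precisely where the argument needs a new idea. The semicontinuity route cannot be completed: even granting upper semicontinuity, nothing forces $C(w)$ to inherit two distinct branches crossing $\beta$; it is perfectly consistent with all the hypotheses that for $w\in U$ near $x$ the unstable arc through $w$ meets $\beta$ exactly once and then escapes into $U'$ toward $\partial\mathcal D$, producing neither option (1) nor a sector in $U$. (There is also a minor looseness in your dichotomy --- an unstable arc at $x$ could enter $U'$ first and only later cross back into $U$ --- but that can be patched and is not the essential difficulty.)

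The paper replaces the semicontinuity idea by a connectedness argument on a transverse one-parameter family. Assume option (1) fails. Let $x$ split $\beta$ into $\beta_1\cup\beta_2$, and choose small arcs $S_k\subset U\setminus\{x\}$ joining $\beta_1$ to $\beta_2$ with $S_k\to x$. For each $y\in S_k$ the unstable continuum $D(y)$ has diameter at least $\delta$, so it must exit $U$; if for some $y_k\in S_k$ it exits through $\partial\mathcal D$ without touching $\beta$, then the Hausdorff limit as $k\to\infty$ produces an unstable arc in $\overline U$ from $x$ to $\partial\mathcal D$, which is option (1) and was excluded. Hence, for large $k$, every $D(y)$ with $y\in S_k$ meets $\beta_1\cup\beta_2$. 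Now set $S_k^{(i)}=\{y\in S_k:D(y)\cap\beta_i\neq\emptyset\}$ for $i=1,2$; these are closed, nonempty (the endpoints of $S_k$ lie on $\beta_1$ and $\beta_2$ respectively), and cover the connected arc $S_k$, so some $y$ lies in both. The sub-arc of $D(y)$ between its hits on $\beta_1$ and $\beta_2$, together with the sub-arc of $\beta$ between them, bounds the desired bi-asymptotic sector in $U$. This connectedness trick is the missing ingredient: it converts the impossible task of controlling the shape of a single $C(w)$ into a robust topological interpolation along a connected family of base points.
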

\begin{proof}
Assuming that item 1 does not hold let us prove item 2.
Since $f$ is $N$-expansive given $x\in M$ there is at most $N-1$ points shadowing the orbit of $x$.
Thus there is $r>0$ such that in $B(x,r)$ there is no point $y\neq x$,  such that its $f$-orbit $\alpha$-shadows the orbit of $x$.
Suppose that $x$ separates $\beta$ in $\beta_1$ and $\beta_2$.
We are assuming that there is no unstable arc
$\gamma\subset W^u_\epsilon(x)$, from $x$ to $\partial \mathcal{D}$, contained in $U$.
Let $\{S_k\}$ a family of arcs contained in $U\backslash \{x\}$, joining $\beta_1$ with $\beta_2$ and 
converging to $x$ when $k\to\infty$.
By Proposition \ref{continuos}, for any point $y\in S_k$ there exists a non trivial arc $D(y)\subset W^u_\epsilon(y)$ of diameter greater than a fixed  $\delta>0$.
If 
there 
is $y_k\in S_k$ such that $D(y_k)$ does not cut $\beta_1\cup\beta_2$ then $D(y_k)$ necessarily intersects
$\partial \mathcal{D}$ and taking limits, in the Hausdorff
distance on compact subsets, 
we obtain an unstable arc satisfying item 1 which is not possible since we are assuming that item 1 does not hold.

\begin{figure}[htb]
\begin{center}
\includegraphics{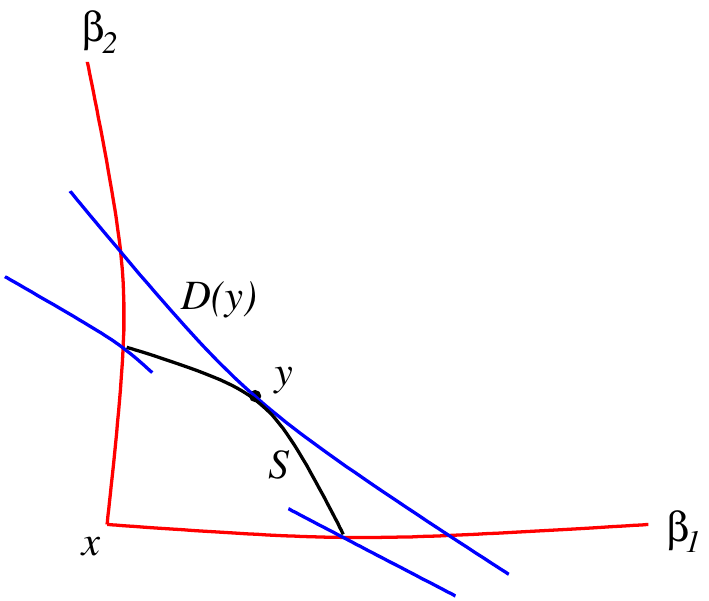}
\caption{The unstable arc $D(y)$ cuts both stable arcs $\beta_1$ and $\beta_2$.} \label{fig1}
\end{center}
\end{figure}

Thus, for every point $y\in S_k$
we have that $D(y)$ cuts either $\beta_1$ or $\beta_2$.
Let $S^{(1)}_k=\{y\in S_k\,:\, D(y)\cap\beta_1\neq \emptyset\}$ and $S^{(2)}_k=\{y\in S_k\, : \, D(y)\cap \beta_2\neq\emptyset\}$.
Then $S^{(1)}_k$ and $S^{(2)}_k$ are both closed and non-empty.
Since for all $k>0$ $S_k$ is connected we obtain that there is a point $y\in S_k$ such that $D(y)$ cuts
both $\beta_1$ and $\beta_2$ making a bi-asymptotic sector as can be seen at Figure \ref{fig1}.
%
%
%
%
\end{proof}

\section{Two-expansivess  and bi-asymptotic sectors}
\label{s-nonbiasym}

Let $M$ be a compact boundaryless surface.
In this section  $f:M\to M$ is a $2$-expansive homeomorphisms with nonwandering set $\Omega(f)$
being the whole of $M$.
We will prove that such homeomorphism has no bi-asymptotic
sectors (recall Definition \ref{Dfbias}) . To do so we proceed as follows.

Let $\alpha>0$ be an expansive constant for $f$, i.e., given any subset $C$ of $M$, if
$\diam( f^n(C))\leq\alpha$ for all $n\in\Z$ then $C$ has at most two points.

Let $D$ be a bi-asymptotic sector of diameter less than $\alpha$ bounded by a stable arc $a^s$ and an unstable arc $a^u$ as in Figure \ref{figbias1}.

\begin{figure}[h]
\begin{center}
\includegraphics{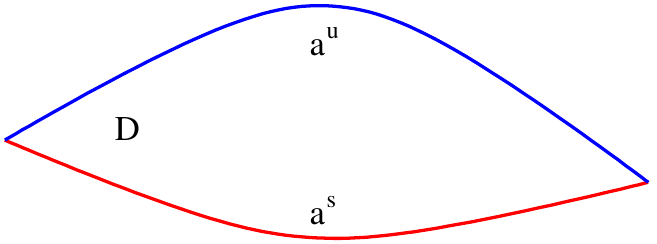}
\end{center}
\caption{Bi-asymptotic sector.}
\label{figbias1}
\end{figure}

For $p\in D$ define $C^s_D(p)$ and $C^u_D(p)$ as the
connected component of $W^s(p)\cap D$ and $W^u(p)\cap D$ containing $p$ respectively.

\begin{Lem}
 If $C^u_D(p)$ separates $D$ then it meets twice the stable boundary $a^s$ of $D$.
\end{Lem}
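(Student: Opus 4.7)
The plan is to reduce to a separating subarc $\gamma \subset C^u_D(p)$ with endpoints $x_1, x_2 \in \partial D = a^s \cup a^u$, and then derive a contradiction whenever at least one of $x_1, x_2$ lies in the relative interior of $a^u$. Since $C^u_D(p)$ is arc-connected (by the local connectedness and arc-connectedness of stable and unstable sets established earlier in this section), and it separates the topological disk $D$, I first choose such an arc $\gamma$ with endpoints on $\partial D$ that still separates $D$ (a standard planar topology argument). With the corners $v_1, v_2 := a^s \cap a^u$ counted as lying on $a^s$, it then suffices to rule out the configurations in which some $x_i$ lies in the relative interior of $a^u$.

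Suppose for contradiction that $x_1 \in a^u \setminus \{v_1, v_2\}$, and split into two sub-cases according to $x_2$. First assume $x_2 \in a^s \setminus \{v_1, v_2\}$. Concatenating $\gamma \subset W^u_\epsilon(p)$ with the subarc of $a^u$ from $x_1$ to $v_1$ gives a connected set linking $p$ to $v_1$ inside a common unstable set (after replacing $\epsilon$ by a fixed multiple still less than $\alpha/2$), so $x_2 \in W^u_\epsilon(v_1)$; since also $x_2, v_1 \in a^s$ gives $x_2 \in W^s_\epsilon(v_1)$, I conclude $x_2 \in \Gamma_\alpha(v_1)$. As $v_2 \in a^s \cap a^u$ also bi-asymptotes to $v_1$, the set $\Gamma_\alpha(v_1)$ contains the three distinct points $v_1, v_2, x_2$, contradicting $2$-expansiveness.

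Now assume instead $x_2 \in a^u$ (interior or a corner). Then $\gamma$ together with the subarc $\beta$ of $a^u$ between $x_1$ and $x_2$ forms a Jordan curve contained in a single unstable set $W^u_\epsilon(p)$, bounding a topological subdisk $U \subset D$. By continuum-wise expansiveness (Lemma \ref{l-continuum-wise}), $\diam(f^{-n}(\partial U)) \to 0$ as $n \to \infty$; since each $f^{-n}(U)$ is a topological disk bounded by the Jordan curve $f^{-n}(\partial U)$ and eventually lies inside a small Euclidean chart of $M$, it follows that $\diam(f^{-n}(U)) \to 0$ as well. This forces every interior point of $U$ to be a Lyapunov stable point for $f^{-1}$, contradicting Proposition \ref{continuos} applied to $f^{-1}$.

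The main obstacle is this last step: passing from shrinking of $\partial U$ to shrinking of the whole disk $U$. One has to justify, using compactness of $M$ and the locally Euclidean structure of the surface, that backward images of $U$ eventually sit in small charts so that the planar fact ``a bounded Jordan domain has diameter controlled by its boundary'' applies. The upgrade from $\diam(f^{-n}(\partial U)) \leq 2\epsilon$ to $\diam(f^{-n}(\partial U)) \to 0$ is obtained via the standard continuum-wise expansiveness argument: otherwise a Hausdorff subsequential limit of the backward iterates would be a non-degenerate continuum with globally bounded diameter under all iterates of $f$.
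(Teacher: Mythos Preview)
Your argument is correct and follows essentially the same route as the paper's proof: reduce to an arc in $C^u_D(p)$ with both endpoints on $\partial D$, and then eliminate the two bad configurations, using exactly the same two mechanisms (a disc bounded by two unstable arcs yields Lyapunov stable points for $f^{-1}$, contradicting Proposition~\ref{continuos}; a mixed configuration produces the three points $v_1,v_2,x_2$ on $a^s$ that all lie in a common local unstable set, contradicting $2$-expansiveness). Your extra care in justifying that $\diam f^{-n}(\partial U)\to 0$ forces $\diam f^{-n}(U)\to 0$ is welcome but not a departure in method, and the requirement that $\gamma$ itself separate $D$ is harmless but unnecessary---the paper simply takes any arc in $C^u_D(p)$ joining two boundary points.
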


\begin{proof}
Observe that $D$ is a $2$-disk. Fix $p$ an interior point of $D$.
Since $C^u_D(p)$ separates $D$ we have that
$\partial D\cap C^u_D(p)$ has at least two points.
Moreover, since $C^u_D(p)$ is arc-connected, these two points can be joined by an arc $b$ contained in $C^u_D(p)$.
We need to show that these points are in $a^s$.
There are three possible cases.
In the first case $b$ cuts twice the unstable boundary of $D$ as in the first picture of Figure \ref{figbias2}.
\begin{figure}[h]
\begin{center}
\includegraphics{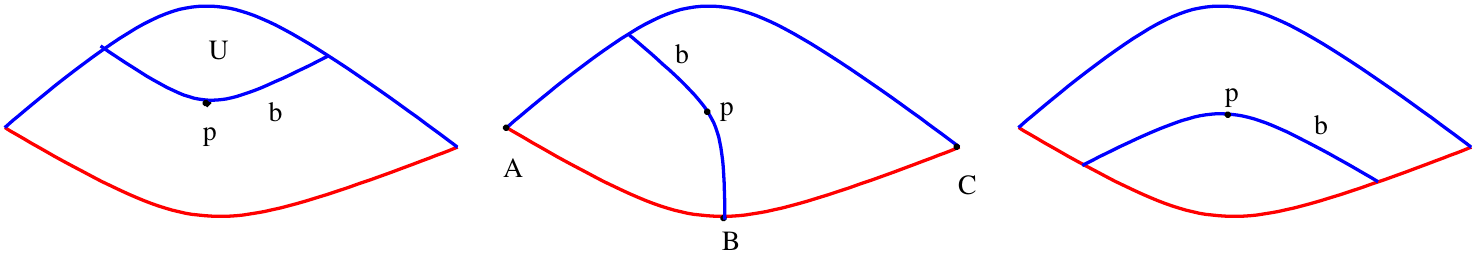}
\end{center}
\caption{The only possible case is the right hand side picture }
\label{figbias2}
\end{figure}
Both unstable arcs bound an open disc $U$, as in the figure.
This is a contradiction to Proposition \ref{continuos}, because the points in $U$ are Lyapunov stable.
The second case corresponds to $b$ intersecting the stable and the unstable arcs of the bi-asymptotic sector.
In this case we get  three points at $a^s$ contradicting the 2-expansiveness as shown in the second picture of Figure \ref{figbias2}: the points $A,B,C$ are in the same local stable and local unstable set.
Therefore the only possible case corresponds to the right hand side  picture at Figure \ref{figbias2}, that is exactly what we want to prove.
\end{proof}

In the set $\mathcal{F}^s=\{C^s_D(x):x\in D\}$ we can define an order as $C^s_D(x)<C^s_D(y)$
if $a^s$ and $C^s_D(y)$ are separated by $C^s_D(x)$. See Figure \ref{figorden}.
\begin{figure}[h]
\begin{center}
\includegraphics{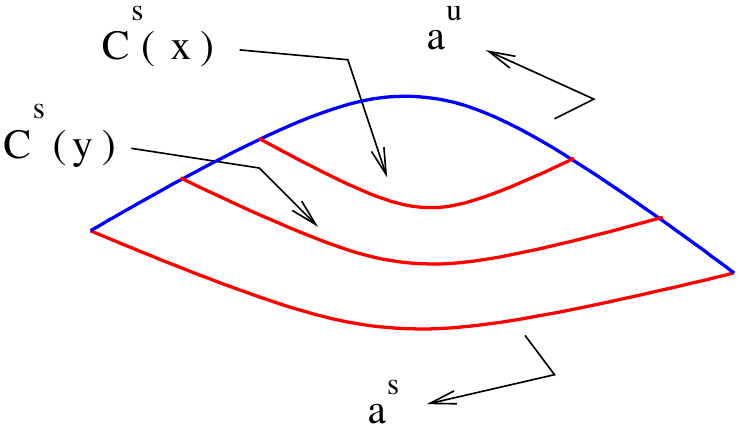}
\end{center}
\psfrag{s}{$a^s$}
\psfrag{u}{$a^u$}
\psfrag{x}{$C^s_D(x)$}
\psfrag{y}{$C^s_D(y)$}
\caption{Order of stable arcs separating a bi-asymptotic sector.}
\label{figorden}
\end{figure}

\begin{Lem}
\label{ordentotal}
 The order $<$ in $\mathcal{F}^s$  is a total order.
\end{Lem}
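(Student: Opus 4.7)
The approach is to combine the planar topology of the disc $D$ with the bound on shadowing orbits from 2-expansiveness to show that any two distinct classes in $\mathcal{F}^s$ are comparable under $<$. I would first observe that distinct elements of $\mathcal{F}^s$ are pairwise disjoint, since connected components of stable sets intersected with $D$ either coincide or are disjoint. Applying the stable analog of the preceding lemma, whose proof transfers verbatim by swapping the roles of $a^s$ and $a^u$ throughout the three-case analysis (two hits on $a^s$ would bound a forward-iterated region of Lyapunov-stable points contradicting Proposition \ref{continuos}; a hit on each of $a^s$ and $a^u$ would yield three bi-asymptotic companions violating 2-expansiveness; only two hits on $a^u$ remain admissible), every element $C^s_D(x)$ that separates $D$ meets $a^u$ in exactly two points and bounds, together with the intervening sub-arc of $a^u$, a closed sub-disc $E_x \subset D$ disjoint from $a^s$.

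Given two distinct separating classes $C^s_D(x)$ and $C^s_D(y)$, the disjointness of the arcs in the disc $D$ forces a trichotomy on the endpoint intervals on $a^u$: either $E_y \subset E_x$, or $E_x \subset E_y$, or the two intervals are disjoint (the ``side-by-side'' configuration). In each of the first two nested cases, the outer stable arc separates $a^s$ from the inner class, which gives the desired comparability under $<$ immediately.

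The main obstacle is ruling out the side-by-side configuration. In this case, both $E_x$ and $E_y$ are themselves bi-asymptotic sub-sectors, and their four corner endpoints on $a^u$ lie in a common unstable set $W^u(P)$, where $P$ is a corner of $D$. When $W^s(x) = W^s(y)$, those four endpoints are mutually bi-asymptotic, producing four distinct points in a single $\Gamma_\alpha$-set, a direct contradiction with 2-expansiveness. For the case $W^s(x) \neq W^s(y)$ a finer argument is required: I would consider a point $z$ on $a^u$ in the gap between the two endpoint intervals and apply Lemma \ref{inestsepara} to the component of $D \setminus (C^s_D(x) \cup C^s_D(y))$ containing $z$, noting that every stable component in this intermediate strip is again constrained by the same trichotomy. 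A pigeonhole argument on the finitely many equivalence classes of stable arcs at boundary points (Lemma \ref{finitapatas}) should then force either the stable sets at $x$ and $y$ to coincide, reducing to the previous contradiction, or the appearance of an additional bi-asymptotic companion on $a^u$ via a limit of intermediate stable arcs. Either outcome contradicts 2-expansiveness, so the side-by-side case is excluded, nesting is the only remaining possibility, and $<$ is a total order on $\mathcal{F}^s$.
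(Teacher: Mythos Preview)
Your setup is sound: the disjointness of distinct elements of $\mathcal{F}^s$, the stable analogue of the preceding lemma, and the nested-versus-side-by-side trichotomy on the endpoint pairs in $a^u$ are all correct, and the nested cases do give comparability immediately.

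The gap is in the side-by-side case. Your subcase $W^s(x)=W^s(y)$ is essentially vacuous or at best very special: what matters for membership in a common $\Gamma_\alpha$-class is that the four endpoints lie on a \emph{single local} stable continuum, and by hypothesis $C^s_D(x)$ and $C^s_D(y)$ are distinct components of the local stable set in $D$, so in the generic situation you are automatically in your ``hard'' subcase. There you do not actually give an argument. Invoking Lemma~\ref{inestsepara} on the middle strip produces either an unstable arc to the boundary or yet another bi-asymptotic sub-sector, neither of which is by itself a contradiction; and Lemma~\ref{finitapatas} bounds the number of stable branches \emph{at a single point}, which says nothing about how many distinct stable components may cross a strip between $C^s_D(x)$ and $C^s_D(y)$. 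The phrases ``should then force'' and ``via a limit of intermediate stable arcs'' mark precisely the place where a genuine mechanism is missing.

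The paper closes this gap with a short connectedness argument that you should adopt. Assuming $C^s_D(x)$ and $C^s_D(y)$ are incomparable, their four endpoints cut $a^u$ into three sub-arcs $\gamma_1,\gamma_2,\gamma_3$. On the connected region $E\subset D$ lying on the $a^s$-side of $C^s_D(x)\cup C^s_D(y)$, set
\[
A_{ij}=\{p\in E:\ C^s_D(p)\cap\gamma_i\neq\emptyset\ \text{and}\ C^s_D(p)\cap\gamma_j\neq\emptyset\},\qquad 1\le i<j\le 3.
\]
Each $A_{ij}$ is closed; they cover $E$ (every $C^s_D(p)$ separates $D$ and hence, by the stable analogue of the preceding lemma, meets $a^u$ twice); and all three are nonempty since $C^s_D(x)\subset A_{12}$, $C^s_D(y)\subset A_{23}$, $a^s\subset A_{13}$. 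Connectedness of $E$ forces two of the $A_{ij}$ to intersect, yielding a single $C^s_D(p)$ meeting three distinct sub-arcs of $a^u$, hence three points in a common local stable and local unstable set --- a direct contradiction to $2$-expansiveness. This replaces your unfinished pigeonhole/limit sketch entirely.
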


\begin{proof}
 Given $C^s_D(x),C^s_D(y)\in \mathcal{F}^s$, $C^s_D(x)\neq C^s_D(y)$, we have to prove that $C^s_D(x)<C^s_D(y)$ or $C^s_D(y)<C^s_D(x)$.
By contradiction assume this is not the case.
Therefore we can consider $\gamma_1,\gamma_2,\gamma_3\subset a^u$ three subarcs of the unstable
 boundary of the bi-asymptotic sector $D$. See Figure \ref{figbiasgammas}.
\begin{figure}[h]
\begin{center}
\includegraphics{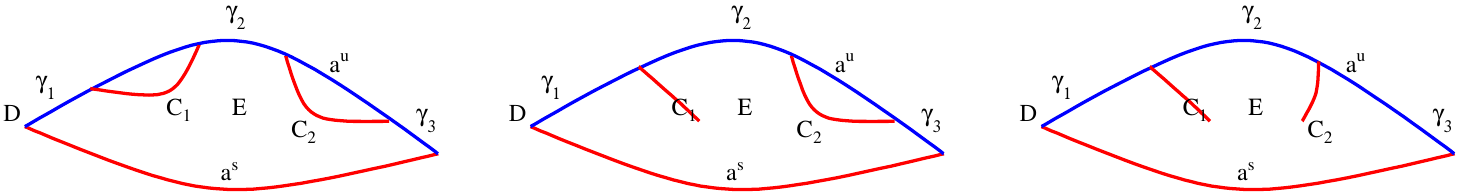}
\end{center}
\caption{Impossible cases for a 2-expansive homeomorphism.}
\label{figbiasgammas}
\end{figure}

Let $E$ be the connected component of $D\setminus (C^s_D(x)\cup C^s_D(y))$ containing $a^s$ as shown in Figure \ref{figbiasgammas}.
For $1\leq i<j\leq 3$, define
$$A_{ij}=\{x\in E:C^s_D(x)\cap \gamma_i\neq\emptyset,C^s_D(x)\cap\gamma_j\neq\emptyset\}.$$
We have that $C^s_D(x)\subset A_{12}$, $C^s_D(y)\subset A_{23}$ and $a^s\subset A_{13}$, so, these sets are not empty.
 It is easy to see that they are closed and by the previous lemma they cover $E$.
 Since $E$ is connected they can not be disjoint, but this contradicts 2-expansiveness.
 \end{proof}

Given a stable arc $b$ separating $D$ we
consider the map $g\colon b\to b$ defined by
\[
 C^u_D(x)\cap b=\{x,g(x)\}.
\]
 Notice that if $C^u_D(x)\cap C^s_D(x)=\{x\}$ then $g(x)=x$.
The hypothesis of 2-expansiveness implies that $C^u_D(x)\cap b$ has at most two points, therefore $g$ is well defined.

\begin{Lem}
\label{gcont}
For every stable arc $b\subset D$ separating $D$, the map $g\colon b\to b$ is continuous.
\end{Lem}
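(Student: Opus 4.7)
The plan is to prove continuity of $g$ by an upper semicontinuity argument applied to the set-valued map $x\mapsto C^u_D(x)$, combined with $2$-expansiveness to eliminate a degenerate limit. The main obstacle will be the last step: ruling out that $g(x_n)\to x$ along a subsequence when $g(x)\neq x$.

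First I would argue by contradiction. If $g$ is not continuous at some $x\in b$, pick a sequence $x_n\to x$ in $b$ with $g(x_n)\not\to g(x)$. By compactness of $b$ and of the hyperspace of compact subsets of $D$ in the Hausdorff metric, pass to a subsequence so that $g(x_n)\to y\in b$ with $y\ne g(x)$ and $C^u_D(x_n)\to K$ in the Hausdorff metric, for some continuum $K\subset D$.

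Next I would show $K\subset C^u_D(x)$. Any $z\in K$ is a limit of points $z_n\in C^u_D(x_n)\subset W^u_\epsilon(x_n)$, so $d(f^{-m}(x_n),f^{-m}(z_n))\le\epsilon$ for every $m\ge 0$; letting $n\to\infty$ and using continuity of each $f^{-m}$ gives $d(f^{-m}(x),f^{-m}(z))\le\epsilon$, hence $z\in W^u_\epsilon(x)$. A Hausdorff limit of connected sets is connected, so $K$ is a continuum containing $x$, and therefore $K\subset C^u_D(x)$. Consequently $y=\lim g(x_n)\in K\cap b\subset C^u_D(x)\cap b=\{x,g(x)\}$, and since $y\ne g(x)$ we must have $y=x$, which forces $g(x)\ne x$.

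The hard part is to contradict the remaining case: $x_n\to x$ and $g(x_n)\to x$ while $g(x)\ne x$. By Proposition \ref{continuos} every $C^u_D(x_n)$ contains an unstable subcontinuum through $x_n$ of diameter at least $\delta$, so $K$ is a nondegenerate subcontinuum of $C^u_D(x)$ meeting $b$ only at $x$. I would then close the picture inside $D$: for each large $n$, $C^u_D(x_n)$ together with the short subarc of $b$ joining $x_n$ to $g(x_n)$ bounds a subregion $D_n\subset D$ of diameter bounded below by $\delta$, and these regions accumulate on a region whose boundary collapses to $K\cup\{x\}$. An argument in the spirit of Lemma \ref{inestsepara} and Proposition \ref{continuos} then shows this limit region has interior points that are Lyapunov stable, the desired contradiction. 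Alternatively, since $K$ and the arc of $C^u_D(x)$ joining $x$ to $g(x)$ are two distinct nondegenerate subcontinua of $C^u_D(x)$ emanating from $x$, a short stable arc transverse to $b$ near $x$ would meet $C^u_D(x_n)$ at two points close to $x$ and $C^u_D(x)$ at a third point, producing three points lying in a common local stable set and a common local unstable set, which violates $2$-expansiveness. Either way we reach a contradiction, and $g$ is continuous.
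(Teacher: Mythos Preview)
Your semicontinuity strategy is sound through step~3: the Hausdorff limit $K$ of $C^u_D(x_n)$ is indeed a subcontinuum of $C^u_D(x)$, so any accumulation point of $g(x_n)$ lies in $\{x,g(x)\}$. The gap is in step~4, where both alternatives you propose are problematic.

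In Alternative~1 the regions $D_n$ are bounded by the unstable arc $C^u_D(x_n)$ of diameter $\geq\delta$ and a stable subarc of $b$ whose length tends to~$0$. These regions become arbitrarily thin and their Hausdorff limit is the one-dimensional set $K$ itself, with empty interior; there is no ``limit region'' with interior points to which a Lyapunov-stability argument applies. In Alternative~2 you invoke ``a short stable arc transverse to $b$ near $x$'', but $b$ is itself a stable arc, so this phrase has no clear meaning. If one instead reinterprets it as a nearby stable arc $b'=C^s_D(p)$ parallel to $b$, the three intersection points you obtain lie on two \emph{different} unstable continua $C^u_D(x_n)$ and $C^u_D(x)$, not a common one, so $2$-expansiveness is not violated.

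The paper's argument avoids all of this by a different idea: it observes that $g$ is an involution of the arc $b$, hence bijective, and then proves $g$ is order-reversing. If $g$ were not decreasing, one would find $x<y$ with $g(x)<g(y)$; the case $x<g(x)<y<g(y)$ contradicts the total order on unstable continua (Lemma~\ref{ordentotal}), while the case $x<y<g(x)<g(y)$ forces the unstable arcs joining $x$ to $g(x)$ and $y$ to $g(y)$ to overlap, yielding four points in a common local stable and a common local unstable set, against $2$-expansiveness. A monotone bijection of an interval is automatically continuous. This route is shorter and sidesteps the limit analysis entirely; if you want to salvage your approach, the missing ingredient in step~4 is precisely the total order of Lemma~\ref{ordentotal}, which forces $g(x)$ to lie between $x_n$ and $g(x_n)$ on $b$ (or conversely) and hence blocks $g(x_n)\to x$ when $g(x)\neq x$.
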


\begin{proof}
Since $b$ is homeomorphic to the interval $[0,1]$ we can consider in $b$ an order defining its topology.
We will show that $g$ is decreasing with respect to such an order on the arc $b$.
It is well known that this allows us to conclude that $g$ is continuous because $g\colon b\to b$ is bijective, in fact
$g=g^{-1}$ as can be easily seen from the definition of $g$.

By contradiction suppose that $g$ is not decreasing. Then there are $x,y\in b$ such that $x<y$ and $g(x)<g(y)$.
We have essentially two possible cases: $x<g(x)<y<g(y)$ or $x<y<g(x)<g(y)$. Other cases are obtained interchanging $x$ with $g(x)$
or $y$ with $g(y)$.
The first case contradicts Lemma \ref{ordentotal} because the arc from $x$ to $g(x)$ is not comparable with
the arc from $y$ to $g(y)$. The second case contradicts 2-expansiveness, because the unstable arc $\gamma_1$
from $x$ to $g(x)$ and the arc $\gamma_2$ from $y$ to $g(y)$ must have nontrivial intersection.
Then $\gamma=\gamma_1\cup\gamma_2$ is a unstable continuum containing
the four points $x,y,g(x),g(y)$.
Since these points are also in the stable arc $b$ we contradict 2-expansiveness.
\end{proof}

\begin{Lem}
\label{tangencia}
If $b\subset D$ is an unstable arc meeting twice $a^s$  then there is $z\in b$ such that
$b\cap C^s_D(z)=\{z\}$ (a fixed point of $g$).
\end{Lem}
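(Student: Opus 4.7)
The plan is to mimic the construction of $g$ from Lemma \ref{gcont}, but with the roles of stable and unstable arcs interchanged, and then invoke the intermediate value theorem. To set things up, I would parameterise the unstable arc $b$ as a map $b\colon[0,1]\to D$ with $b(0)=p$ and $b(1)=q$ the two points of $b\cap a^s$.

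Next I would define the analogous map $g\colon b\to b$ by
\[
C^s_D(x)\cap b=\{x,g(x)\},
\]
with the convention $g(x)=x$ when the intersection reduces to a single point. This is well defined: three distinct points in $C^s_D(x)\cap b$ would simultaneously belong to a common local stable set (namely $C^s_D(x)$) and to a common local unstable set (since $b$ itself is an unstable arc), producing three orbits that $\alpha$-shadow each other and contradicting $2$-expansiveness. Moreover, since the endpoints $p,q$ both lie on the connected stable arc $a^s\subset C^s_D(p)=C^s_D(q)$, the map $g$ swaps the endpoints, i.e.\ $g(p)=q$ and $g(q)=p$.

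The crucial step is then to prove that this $g$ is continuous, by running the proof of Lemma \ref{gcont} verbatim after interchanging stable and unstable arcs. This requires the dual of Lemma \ref{ordentotal}: a total order on the family $\{C^u_D(x):x\in D\}$ with $a^u$ replacing $a^s$. This duality, which I expect to be the \emph{main technical obstacle}, is essentially a bookkeeping exercise made legitimate by the $f\leftrightarrow f^{-1}$ symmetry of $2$-expansiveness together with the symmetric roles of $a^s$ and $a^u$ in the bi-asymptotic sector $D$; the monotonicity argument from Lemma \ref{gcont} then shows that $g$ is a strictly decreasing continuous involution of $b$.

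Granted continuity, lifting $g$ to the parameter interval produces a continuous map $g^{*}\colon[0,1]\to[0,1]$ with $g^{*}(0)=1$ and $g^{*}(1)=0$. The function $t\mapsto g^{*}(t)-t$ is therefore continuous, positive at $t=0$ and negative at $t=1$, so the intermediate value theorem yields some $t_0\in(0,1)$ with $g^{*}(t_0)=t_0$. Setting $z=b(t_0)$ gives $b\cap C^s_D(z)=\{z\}$, which is the desired fixed point of $g$.
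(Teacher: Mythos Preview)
Your proposal is correct and follows essentially the same route as the paper. The paper's proof is terse: it simply asserts that the (dual) map $g$ on the unstable arc $b$ is an orientation-reversing homeomorphism of an interval and hence has a fixed point, implicitly relying on the stable/unstable-swapped versions of Lemmas~\ref{ordentotal} and~\ref{gcont}; you have made this duality and the intermediate value argument explicit, which is exactly what is needed.
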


\begin{proof}
We need to prove that there is a fixed point of $g$ in $b$ as in
Figure \ref{figbias3}.
\begin{figure}[h]
\begin{center}
\includegraphics[scale=0.70]{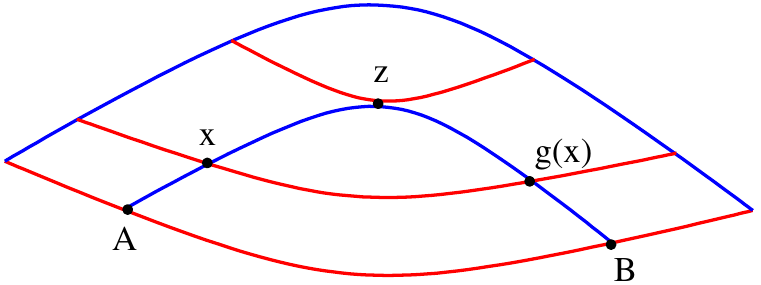}
\end{center}
\caption{}
\label{figbias3}
\end{figure}
Since $b$ is homeomorphic to an interval and $g$ is a homeomorphism reversing orientation
we have that $g$ must have a fixed point $z\in b$.
\end{proof}

\subsection{Regular bi-asymptotic sectors.}
\begin{Df}
A bi-asymptotic sector is \emph{regular} if for all $p$ interior to $D$ we have that
$C^s_D(p)$ and $C^u_D(p)$ separate $D$.
\end{Df}

\begin{Prop}
\label{regbiassect}
If $f$ is 2-expansive and $\Omega(f)=M$ then there are no regular bi-asymptotic sectors of diameter less than $\alpha$.
\end{Prop}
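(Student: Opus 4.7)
The plan is to suppose, for contradiction, that $D$ is a regular bi-asymptotic sector of diameter less than $\alpha$, and derive an impossibility by combining the product structure that regularity forces on $D$ with the tangency supplied by Lemma \ref{tangencia}, against either 2-expansiveness or the hypothesis $\Omega(f)=M$.

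First I would extract a full product structure on $D$. By regularity, every interior $p\in D$ has both $C^s_D(p)$ and $C^u_D(p)$ separating $D$; by the separating lemma shown at the start of the section (any separating $C^u_D$ meets $a^s$ twice) and its symmetric counterpart, these arcs respectively join two points of $a^u$ and two points of $a^s$. Combined with Lemma \ref{ordentotal} and its unstable analog this gives two transverse, linearly ordered families of separating arcs foliating $D$. Next, I would pick an interior stable arc $b$ and invoke Lemmas \ref{gcont} and \ref{tangencia} to produce a tangency point $z\in b$ with $C^u_D(z)\cap b=\{z\}$. Since $C^u_D(z)$ separates $D$ and has both endpoints on $a^s$ without recrossing $b$, it lies, away from $z$, entirely on the $a^s$-side of $b$; therefore in the opposite component $D_2$ of $D\setminus b$ the nearby arcs $C^u_D(y)$ (for $y\in b$ close to $z$) still cross $b$ twice, at $y$ and $g(y)$, and trap small open ``lenses'' $U_y\subset D_2$ between a subarc of $b$ and a subarc of $C^u_D(y)$, shrinking to $\{z\}$ as $y\to z$.

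Fix such a lens $U=U_y$ of very small diameter. Every point of $U$ sits on a stable arc (a subarc of $b$) and on an unstable arc (a subarc of $C^u_D(y)$), so its entire $f$-orbit should stay $2\epsilon$-close to $z$'s orbit, giving $U\subset\Gamma_{2\epsilon}(z)$; since $4\epsilon<\alpha$ by the choice of $\epsilon$ fixed earlier in the section and $U$ is uncountable, this contradicts 2-expansiveness. Should this uniform shadowing be too fragile, the fallback is to argue instead that the asymmetric dynamics at the tangency, contraction along the stable side and expansion along the unstable side, makes $U$ wandering for all sufficiently large forward iterates, directly contradicting $\Omega(f)=M$.

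The main obstacle lies in this last step. The stable and unstable arcs bounding $U$ are defined with respect to reference points different from $z$, so shadowing those references does not a priori imply shadowing $z$ itself; propagating closeness back to $z$ requires exploiting the bi-asymptoticity of the corners $p,q$ of $D$, which by 2-expansiveness lie in each other's $\Gamma_\alpha$, together with the smallness $\diam D<\alpha$. The hypothesis $\Omega(f)=M$ enters precisely here to forbid outward dispersal of orbits and force a recurrence that, in concert with the 2-expansive bound $\sharp\Gamma_\alpha\leq 2$, yields the required contradiction.
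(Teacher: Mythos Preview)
Your setup---the two ordered foliations, the tangency point $z$ on a chosen stable arc $b$, and the small ``lens'' $U$ bounded by a subarc of $b$ and a subarc of $C^u_D(y)$---is fine, but the final step does not close. The claim $U\subset\Gamma_{2\epsilon}(z)$ is unsupported: a point $w$ in the \emph{interior} of $U$ is not on $b$ and not on $C^u_D(y)$, so nothing in the construction controls the full orbit of $w$. If interior points of a small stable/unstable-bounded disc automatically shadowed the boundary, then continuum-wise expansiveness alone would forbid every bi-asymptotic sector and the whole section would be unnecessary. Your fallback---that $U$ must be wandering---is equally unjustified; the local tangency picture gives no a priori control on how $f^n(U)$ sits relative to $U$ for large $n$. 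So as written the proposal produces another (smaller) bi-asymptotic sector and then assumes the very conclusion you are trying to prove.

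What the paper does instead is to use $\Omega(f)=M$ \emph{dynamically}, not just as a final contradiction target. One picks a point $p$ in $D$ with $C^s_D(p)\cap C^u_D(p)=\{p,p'\}$, $p\neq p'$ (a Baire-generic condition), so that $p,p'$ bound a small sub-sector $D_p$, and then uses recurrence to take $k\gg 0$ with $f^k(p)\in D$. Under $f^k$ the stable arc from $p$ to $p'$ contracts and stays inside $D$, while the unstable arc $u(p,p')$ stretches and must exit $D$; hence $f^k(u(p,p'))$ cuts the stable boundary $a^s$ of $D$ in two points $q',r'$. Pulling the subarc $s(q',r')\subset a^s$ back by $f^{-k}$ gives a stable arc separating $D_p$, to which Lemma \ref{tangencia} is applied. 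The resulting tangency point and the two endpoints of its unstable arc on $\partial D_p$, pushed forward by $f^k$ and then slid along the unstable foliation of $D$ onto $a^s$, yield \emph{three} points lying simultaneously on a single local stable arc and a single local unstable arc inside $D$, contradicting $2$-expansiveness. The missing idea in your attempt is precisely this iteration step: one needs to let the dynamics stretch an unstable arc out of $D$ to manufacture the third intersection point.
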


\begin{proof}
By contradiction assume that
$D$ is a regular bi-asymptotic sector of diameter smaller than $\alpha$.
Since there are no wandering points we have that there is $p\in D$ and $k>0$ arbitrarily large such that
$q=f^k(p)$ is in $D$.
Since the set of points $\xi$ such that $g(\xi)=\xi$ is of first category in the sense of Baire and $\Omega(f)=M$ there are (a residual subset of) points $p\in D$ such that $\{p,p'\}=C^s_D(p)\cap C^u_D(p)$ with $p\neq p'$.
The points $\{p,p'\}=C^s_D(p)\cap C^u_D(p)$ determine a regular bi-asymptotic sector $D_p$ contained in $D$
as in Figure \ref{figbias4}\,(b). 
\begin{figure}[h]
\begin{center}
\includegraphics[scale=0.65]{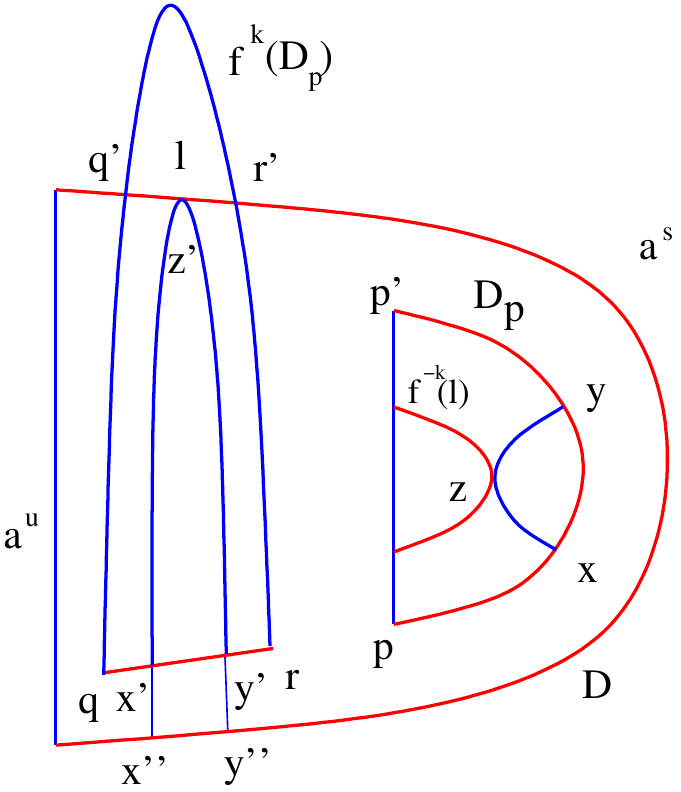}
\end{center}
\caption{ A regular bi-asymptotic sector\,.}
\label{figbias4}
\end{figure}
For arbitrarily large $k$, the stable
arc (in red at Figure~ \ref{figbias4}) defined by $p$ and $p'$ is transformed by $f^k$ into a stable arc with extreme points $q=f^k(p)$ and $r=f^k(p')$ that is contained in $D$.
The sector $D_p$ is transformed into $f^k(D_p)$ 
and the image by $f^k$ of the unstable arc $u(p,p')$ from $p$ to $p'$ is not contained in $D$.
Then, there are two points $q',r'\in a^s\cap f^k(u(p,p'))$ as in Figure \ref{figbias4}\,
(recall that $a^s$ is the stable arc in the bi-asymptotic sector $D$).
Now consider the stable arc $l=s(q',r')$ contained in $a^s$.
The stable arc $f^{-k}(l)$ separates the bi-asymptotic sector $D_p$ and
therefore we can apply Lemma \ref{tangencia} to obtain a point
$z\in f^{-k}(l)$ such that the unstable arc $u(z)$ through $z$ in $D_p$ meets $f^{-k}(l)$ only at $z$.
Take $x,y$ the intersection points of the stable arc in the boundary of $D_p$ and $u(z)$.
Consider the points $x'=f^k(x)$, $y'=f^k(y)$ and $z'=f^k(z)$ as in Figure \ref{figbias4}\,.
The unstable arcs in $D$ through $x'$ and $y'$ meet $a^s$ at $x''$ and $y''$ respectively.
The three points $z',x'',y''$ are in the intersection of a local stable arc
and a local unstable arc both contained in $D$ and so the orbits of  $x''$ and $y''$ $\alpha$-shadow that of $z$ and the orbit of $x''$ $\alpha$-shadows the orbit of $y''$, contradicting 2-expansiveness.
\end{proof}

\subsection{Bi-asymptotic sectors with spines}
Let $D$ be a bi-asymptotic sector with $\partial D=a^s\cup a^u$, where $a^s$ is a stable arc and $a^u$ is an unstable arc.

\begin{Df}
 A non trivial continuum $C^s_D(p)$  ($C^u_D(p)$) is a \emph{stable spine} (resp. \emph{unstable spine}) if it does not separate $D$.
\end{Df}

As before, we consider the map $g\colon a^u\to a^u$ defined by $a^u\cap C^s_D(x)=\{x,g(x)\}$.
Recall that, by Lemma \ref{gcont}, $g$ is continuous 
and reverses orientation.
As a consequence if a point $p\in a^u$ is in a stable spine then $p$ is a fixed point of $g$.

\begin{Lem}
Bi-asymptotic sectors contain at most one stable spine and one unstable spine.
\end{Lem}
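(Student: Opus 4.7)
The plan is to exploit the involution $g\colon a^u\to a^u$ from the preceding discussion. By Lemma~\ref{gcont}, $g$ is continuous, and as recorded just above the statement, $g=g^{-1}$ reverses the orientation of the arc $a^u$. Identifying $a^u$ with the interval $[0,1]$, an orientation-reversing continuous involution must satisfy $g(0)=1$ and $g(1)=0$; hence the continuous function $x\mapsto g(x)-x$ is strictly decreasing from $+1$ at $x=0$ to $-1$ at $x=1$, and so vanishes at a unique point $p^\ast\in a^u$.

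Next I would use the observation recorded immediately before the statement: any $p\in a^u$ lying in a stable spine is a fixed point of $g$. By the previous step there is only one such fixed point, so any stable spine meeting $a^u$ must do so precisely at $p^\ast$, and two distinct stable spines would produce two distinct fixed points of $g$, a contradiction. To complete the argument I would rule out a stable spine $C^s_D(p)$ disjoint from $a^u$: such a non-trivial subcontinuum of the closed disc $D$ would meet $\partial D$ only inside $a^s$; since $a^s$ itself is a single stable continuum, the union $C^s_D(p)\cup a^s$ would either coincide with $a^s$ (contradicting non-triviality of $C^s_D(p)$ at an interior point) or would extend off $a^s$ into the interior of $D$ and then form a continuum separating $D$ --- contradicting that $C^s_D(p)$ is a spine. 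Hence every stable spine meets $a^u$ at $p^\ast$, so at most one stable spine exists in $D$.

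The entirely symmetric argument, replacing $g$ by the analogous involution $g'\colon a^s\to a^s$ defined via unstable continua crossing $D$, yields at most one unstable spine in $D$.

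The main obstacle is the second step, specifically ruling out stable spines disjoint from $a^u$: the uniqueness of the fixed point of $g$ is a clean one-line elementary fact from the intermediate value theorem, but the topological analysis needed to force each stable spine to touch $a^u$ has to combine the disc structure of $D$ with $2$-expansiveness and Proposition~\ref{continuos}, so that no ``isolated'' stable subcontinuum can sit inside $D$ without violating either the spine definition or one of the preceding lemmas.
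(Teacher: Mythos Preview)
Your central argument is exactly the paper's: $g\colon a^u\to a^u$ is an orientation-reversing homeomorphism of an arc, hence has a unique fixed point, and a stable spine through a point of $a^u$ gives such a fixed point; the paper's proof consists of precisely these two sentences (plus the symmetric remark for unstable spines).

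The point you single out as the ``main obstacle'' --- a stable spine that fails to meet $a^u$ --- is not addressed by the paper at all; it passes directly from ``unique fixed point of $g$'' to ``at most one stable spine''. Your sketch for this case is honest but not yet an argument: you never say why a non-trivial $C^s_D(p)$ must reach $\partial D$ in the first place, and the clause about a continuum touching only $a^s$ ``extending off $a^s$ into the interior \dots\ and then form[ing] a continuum separating $D$'' is an assertion, not a proof. A cleaner way to tighten half of it: if $C^s_D(p)$ meets $a^s$, then both lie in the same stable set and $C^s_D(p)$ is by definition a connected \emph{component}, so $a^s\subset C^s_D(p)$; hence $C^s_D(p)$ contains the two corner points of $D$, which lie on $a^u$, contradicting disjointness. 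The residual case --- a non-trivial spine lying entirely in the interior of $D$ --- is a genuine subtlety that neither your sketch nor the paper isolates, though it is not needed for how the lemma is used downstream.
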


\begin{proof}
Since $g$ is a homeomorphism of an arc and it reverses orientation we have that $g$ has exactly one fixed point.
So there is at most one stable spine. Similarly there is at most one unstable spine.
\end{proof}

\begin{Lem}
 If $\Omega(f)=M$ and $D$ is a bi-asymptotic sector then if there is a stable spine then there is an unstable one and it cuts the unstable spine in $D$.
\end{Lem}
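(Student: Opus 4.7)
The plan is first to produce the unstable spine, then to show the two spines meet.

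For the existence of the unstable spine, I would run the dual of the construction of $g$: define $h\colon a^s\to a^s$ by $a^s\cap C^u_D(y)=\{y,h(y)\}$. The proofs of Lemmas~\ref{ordentotal} and~\ref{gcont} transpose verbatim after swapping the roles of stable and unstable arcs (they use only 2-expansiveness and the symmetric structure of a bi-asymptotic sector), so the family of separating unstable arcs through points of $a^s$ is totally ordered and $h$ is continuous and strictly decreasing. A strictly decreasing continuous self-map of an arc has a unique fixed point $q^*\in a^s$, and then $C^u_D(q^*)\cap a^s=\{q^*\}$. By the first lemma of Section~\ref{s-nonbiasym}, a separating unstable continuum in $D$ meets $a^s$ in two points; hence $C^u_D(q^*)$ does not separate $D$, so $S^u:=C^u_D(q^*)$ is an unstable spine.

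For the intersection, I would regard the stable spine $S^s=C^s_D(p^*)$ as an arc with one endpoint $p^*\in a^u$ and the other, free endpoint $e^s$ in the interior of $D$, and inspect $C^u_D(e^s)$. If $C^u_D(e^s)$ does not separate $D$, then, being non-trivial by Proposition~\ref{continuos}, it is itself an unstable spine; by the uniqueness asserted in the previous lemma it coincides with $S^u$, so $e^s\in S^s\cap C^u_D(e^s)=S^s\cap S^u$, and we are done.

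The only remaining alternative is that $C^u_D(e^s)$ separates $D$, and I would rule it out by contradiction. By the first lemma of Section~\ref{s-nonbiasym}, $C^u_D(e^s)$ meets $a^s$ in two points $A,B$, which together with a sub-arc $l\subset a^s$ from $A$ to $B$ bound a bi-asymptotic sub-sector $D'\subset D$ of diameter less than $\alpha$. Since $\Omega(f)=M$, Proposition~\ref{regbiassect} applied to $D'$ forces $D'$ to admit a spine, and Lemma~\ref{tangencia} applied to the unstable arc $b:=C^u_D(e^s)$ locates the unique point $z\in b$ with $C^s_D(z)\cap b=\{z\}$. The main obstacle is to convert the forced inner spine of $D'$ into a clean violation of 2-expansiveness through a careful case analysis: if $z=e^s$, then $S^s\cap\overline{D'}$ reduces to $\{e^s\}$ (because $S^s$ is a spine of $D$, hence cannot meet the stable arc $a^s$, and by hypothesis $S^s$ meets $b$ only at $e^s$), making the candidate stable spine of $D'$ anchored at $z$ degenerate; if $z\neq e^s$, then $S^s$ must re-cross $b$ at a second point $e^s_1=g_b(e^s)$, producing two independent stable continua tangent to $b$ whose intersections with the unstable spine of $D'$ (or of $D$ when $q^*\in l$) yield three orbits that mutually $\alpha$-shadow, contradicting 2-expansiveness. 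Pushing this final case analysis through cleanly is the delicate step.
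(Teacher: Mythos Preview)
Your route is genuinely different from the paper's. The paper does not construct the unstable spine directly; it argues by contradiction in one sweep. Assuming either that both spines exist and are disjoint, or that only the stable spine exists, it notes that for every $x\in S^u$ the leaf $C^s_D(x)$ must separate $D$ (otherwise it would itself be the unique stable spine and would meet $S^u$ at $x$), takes a minimal such leaf in the natural order, and cuts off a strictly smaller bi-asymptotic sector $D'\subset D$ bounded by a sub-arc of $a^u$ and an arc in $C^s_D(z)$. One more pass of the same trick inside $D'$ yields a sub-sector with no spines at all, i.e.\ a regular one, contradicting Proposition~\ref{regbiassect}. Everything is thus reduced to the already-proved impossibility of regular sectors, with no case analysis.

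Your plan has two real gaps. First, the fixed point $q^*$ of $h\colon a^s\to a^s$ satisfies $C^u_D(q^*)\cap a^s=\{q^*\}$, but since $q^*$ lies on the boundary arc $a^s$, nothing you invoke forces $C^u_D(q^*)$ to be non-trivial; the unstable continuum supplied by Proposition~\ref{continuos} may lie entirely outside $D$, and then $C^u_D(q^*)=\{q^*\}$ is not a spine. Second, and more seriously, the final case analysis does not close. In the sub-case $z=e^s$ you correctly deduce that $D'$ has no \emph{stable} spine, but $D'$ may still carry an unstable spine and hence fail to be regular, so Proposition~\ref{regbiassect} yields no contradiction; you cannot appeal to the stable/unstable symmetry of the lemma under proof without circularity. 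In the sub-case $z\neq e^s$, the sketch is not a proof: having $S^s$ meet $b$ at the two points $e^s$ and $e^s_1$ gives only two points lying in a common local stable and local unstable set, and you have neither exhibited a third mutually shadowing point nor established that $D'$ possesses an unstable spine to intersect. The paper's carving-down argument is designed precisely to avoid this branching.
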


\begin{proof}
 By contradiction suppose that there is a stable spine and an unstable spine and they are disjoint .
Denote by $S^s$ and $S^u$ the stable and the unstable spines respectively.
For all $x\in S^u$ we have that $C^s_D(x)$ separates $D$ because if this were not the case the spines meets at $x$.
We can define a partial order in $S^u$ as $x<y$ if $C^s_D(y)$ separates $x$ and $a^s$.
It is easy to see that there is a minimal $z\in S^u$ with respect to this order.
In this way we find a bi-asymptotic sector $D'\subset D$ bounded by a sub-arc of $a^u$ and an arc in $C^s_D(z)$.
Arguing in a similar way we find another bi-asymptotic sector $D''\subset D'$ without spines.
Then $D''$ is a regular sector, contradicting Proposition \ref{regbiassect}.

Now if there is a stable spine but no unstable one in a similar way we may find a regular bi-asymptotic sector leading again to a contradiction.
\end{proof}

\begin{Df}
We say that $y\in M$ has a \emph{local product structure} if there is a homeomorphism of $\R^2$ onto an open neighborhood of $y$ such that it maps
horizontal (vertical) lines onto open subsets of local stable (unstable) sets.
\end{Df}

\begin{Lem}
 \label{lps}
 Let $\gamma$ be a closed curve bounding a disc $U$ in the surface and consider $A;B,C,D$ four points in $\gamma$.
 Suppose that there is an open set $Q\subset U$ such that for all $p\in Q$ we have that there is a stable arc $C^s(p)$ and an
 unstable arc $C^u(p)$ meeting only at $p$ such that $C^s$ cuts $AB$ and $CD$ and $C^u$ cuts $BC$ and $DA$.
 Then there is a local product structure around every point of $Q$.
\end{Lem}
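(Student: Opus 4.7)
Fix $p_0\in Q$. By hypothesis, $C^s(p_0)\cup C^u(p_0)$ is a topological cross through $p_0$ that separates the disc $U$ into four closed topological disks, each bounded by one of the sub-arcs $AB,BC,CD,DA$ together with two half-arcs meeting at $p_0$. The plan is to build a chart around $p_0$ using these two arcs as coordinate axes, with nearby points parameterised by holonomy along their own local stable and unstable arcs.

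First I would use openness of $Q$ together with upper semicontinuity of the stable/unstable continua from Proposition \ref{continuos} to select a neighbourhood $V\subset Q$ of $p_0$ on which $q\mapsto C^s(q)$ and $q\mapsto C^u(q)$ vary continuously in the Hausdorff metric, with each arc retaining the property of cutting the appropriate opposite sides of $\gamma$. For every $q\in V$ I would then show that $\sigma(q):=C^s(q)\cap C^u(p_0)$ and $\tau(q):=C^u(q)\cap C^s(p_0)$ are single points. Existence is purely topological: since $C^u(p_0)$ joins a point of $DA$ to a point of $BC$, it separates $AB$ from $CD$ inside $U$, and any arc from $AB$ to $CD$ such as $C^s(q)$ must therefore cross it. For uniqueness, if $C^s(q)$ met $C^u(p_0)$ in two points $a,b$, then sub-arcs of $C^s(q)$ and $C^u(p_0)$ from $a$ to $b$ would bound a bi-asymptotic sector $D'\subset U$; Proposition \ref{regbiassect} together with the spines lemmas of Section \ref{s-nonbiasym} force any such $D'$ either to be regular (already excluded) or to contain a crossing pair of stable and unstable spines, producing an interior point $r\in Q$ whose $C^s(r)$ and $C^u(r)$ fail to meet only at $r$, contradicting the hypothesis on $Q$.

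Next, define the chart $\Phi\colon V\to C^u(p_0)\times C^s(p_0)$ by $\Phi(q)=(\tau(q),\sigma(q))$. Continuity follows from the Hausdorff continuity of the previous step. For injectivity, observe that $\sigma$ is constant on each component of $C^s(q)\cap V$ while $\tau$ is constant on each component of $C^u(q)\cap V$, so $\Phi(q)=\Phi(q')$ with $q\neq q'$ would force $q'$ to stably $\epsilon$-shadow $q$ and also to unstably $\epsilon$-shadow $q$, yielding $q'\in\Gamma_{2\epsilon}(q)\setminus\{q\}\subset\Gamma_\alpha(q)$; by $2$-expansiveness at most one such companion exists for each $q$, and a continuity argument allows shrinking $V$ so that any potential companion is expelled. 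Brouwer invariance of domain then upgrades $\Phi$ to a homeomorphism onto an open neighbourhood of $(p_0,p_0)$ in $C^u(p_0)\times C^s(p_0)\cong\R^2$.

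After parameterising $C^s(p_0)$ and $C^u(p_0)$ as intervals, the map $\Phi^{-1}$ sends horizontal lines (the $\sigma$-level sets) to sub-arcs of local stable sets and vertical lines (the $\tau$-level sets) to sub-arcs of local unstable sets, which is exactly the local product structure we sought to establish. The hard part is the uniqueness of the intersection in the second paragraph together with the elimination of $2$-expansive companions in the third: both rely crucially on the full bi-asymptotic-sector machinery developed in Section \ref{s-nonbiasym}, and once those are in place the remaining invariance-of-domain and holonomy arguments are routine.
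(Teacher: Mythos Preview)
Your overall architecture is the paper's: build a chart from intersections of stable and unstable arcs and invoke invariance of domain. The paper goes in the forward direction, defining $h\colon L^s\times L^u\to M$ by $h(x,y)=C^u(x)\cap C^s(y)$ for small sub-arcs $L^s\subset C^s(p_0)$ and $L^u\subset C^u(p_0)$ contained in $Q$; your $\Phi$ is essentially $h^{-1}$. So there is no real difference of method.

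Where you diverge is in the work you load onto uniqueness and injectivity, and here you have both imported unnecessary machinery and left a genuine gap.

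For uniqueness of $\sigma(q)=C^s(q)\cap C^u(p_0)$ you appeal to Proposition~\ref{regbiassect} and the spine lemmas. Those results carry the standing hypothesis $\Omega(f)=M$, which is not part of Lemma~\ref{lps}, and in any case they are not needed. The lemma's own hypothesis already does the job: if $z\in C^s(q)\cap C^u(p_0)$ and $z\in Q$, then (reading the arcs canonically as the connected pieces inside $U$) $C^s(z)$ coincides with $C^s(q)$ and $C^u(z)$ with $C^u(p_0)$, so the assumption $C^s(z)\cap C^u(z)=\{z\}$ forces the intersection to be a single point.

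For injectivity of $\Phi$ your argument is actually broken. You observe that $\Phi(q)=\Phi(q')$ with $q\neq q'$ makes $q'$ a companion of $q$ in $\Gamma_\alpha(q)$, and then propose to ``shrink $V$ so that any potential companion is expelled.'' This does not work: $2$-expansiveness allows each $q$ one companion, and as $q$ ranges over $V$ those companions may sit arbitrarily close to $q$, so no uniform shrinking of $V$ removes them. The correct argument is again immediate from the hypothesis: $\Phi(q)=\Phi(q')$ forces $q$ and $q'$ to lie on a common stable arc and on a common unstable arc, whence $q,q'\in C^s(q)\cap C^u(q)=\{q\}$.

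In short, the lemma is set up so that the single assumption ``$C^s(p)\cap C^u(p)=\{p\}$ for all $p\in Q$'' drives both well-definedness and injectivity of the chart; the bi-asymptotic-sector analysis and $2$-expansiveness are used elsewhere in the paper to \emph{verify} that assumption, not inside this proof.
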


\begin{proof}
 Fix $p\in Q$. Take $L^s\subset C^s(p)$ a stable arc through $p$ and similarly $L^u$, with $L^s$ and $L^u$ contained in $Q$.
 Define $h\colon L^s\times L^u\to M$ as $h(x,y)=C^u(x)\cap C^s(y)$. This map is well defined, continuous and injective.
 By the Theorem of Invariance of Domain, we have that it is an open map.
 The map $h$ defines a local product structure around $p$.
\end{proof}

\begin{Prop}
 If $f$ is 2-expansive and $\Omega(f)=M$ then there are no bi-asymptotic sectors.
\end{Prop}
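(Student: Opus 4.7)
The plan is to argue by contradiction. Suppose $D$ is a bi-asymptotic sector, and after shrinking it if necessary assume $\diam(D)<\alpha$. By Proposition \ref{regbiassect}, $D$ cannot be regular, and so must contain a spine. Invoking the preceding Lemma I then conclude that both a stable spine $S^s$ and an unstable spine $S^u$ exist and meet at a common interior point $q$ of $D$, with $S^s=C^s_D(q)$ and $S^u=C^u_D(q)$.

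My next step is to reduce the situation to one in which the argument of Proposition \ref{regbiassect} applies. Because $S^s$ and $S^u$ are the unique spines of $D$, for every $p\in D\setminus(S^s\cup S^u)$ both $C^s_D(p)$ and $C^u_D(p)$ separate $D$. I would fix a connected component $D'$ of $D\setminus(S^s\cup S^u)$ and treat its closure as a smaller topological disk whose boundary alternates between stable and unstable arcs assembled from sub-arcs of $a^s$, $a^u$, $S^s$ and $S^u$. Inside $D'$, every interior point has separating local stable and unstable sets, which is precisely the property that made the regular case tractable.

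Then I would replay the final construction of Proposition \ref{regbiassect} inside $D'$. Since $\Omega(f)=M$ and the fixed-point set of $g$ on a suitable stable arc is of first Baire category, I can select a recurrent point $p\in D'$ with $C^s_D(p)\cap C^u_D(p)=\{p,p'\}$ and $p\neq p'$, yielding a sub-bi-asymptotic sector $D_p\subset D'$. For a large return time $k$, the image $f^k(D_p)$ has its stable side inside $D$ but its unstable side forced to exit $D$, meeting $a^s$ at two points $q',r'$. Applying Lemma \ref{tangencia} to $f^{-k}$ of the stable arc $s(q',r')\subset a^s$ produces a tangency point $z$, and propagating its forward image through the local unstable arcs of $D$ creates three distinct points $z',x'',y''$ all lying on a common local stable arc and a common local unstable arc, violating $2$-expansiveness.

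The main obstacle I anticipate is the reduction step: one must verify carefully that the region $D'$ obtained by excising the spines genuinely plays the role of the regular sector in Proposition \ref{regbiassect}. In particular, one needs the boundary of $D'$ to decompose into stable and unstable arcs on which the map $g$ is well-defined and continuous, and one needs $\Omega(f)=M$ together with the Baire argument to produce a recurrent $p\in D'$ with $p'\neq p$. Once this reduction is in place, the remainder of the argument is essentially a transcription of the final paragraph of the proof of Proposition \ref{regbiassect}.
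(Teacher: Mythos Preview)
Your strategy diverges from the paper's, and the obstacle you flag is real. The paper does not attempt to recycle the geometric construction of Proposition~\ref{regbiassect}. Instead it argues as follows: once both spines exist and cross, every regular stable leaf in $D$ meets every regular unstable leaf in exactly two points, so by Lemma~\ref{lps} there is genuine local product structure at every point of $D$ off the two spines. Combined with $\Omega(f)=M$, this yields density of periodic points in $D$ (by the usual Anosov-type argument). One then picks a periodic point $p$ off the spines, sets $\{p,q\}=C^s_D(p)\cap C^u_D(p)$ with $q\neq p$, and observes that for the period $k$ the entire sequence $\{f^{jk}(q):j\in\Z\}$ lies in $\Gamma_\delta(p)$ for $\delta$ as small as one likes. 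This gives infinitely many shadowing companions of $p$, contradicting $2$-expansiveness with no further geometry needed.

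Your reduction, by contrast, runs into two concrete problems. First, a connected component $D'$ of $D\setminus(S^s\cup S^u)$ is not a bi-asymptotic sector: its boundary is a quadrilateral made of alternating stable and unstable sub-arcs, so neither the map $g$ nor Lemma~\ref{tangencia} is available in $D'$ as stated. Second, even if you build $D_p\subset D$ and run the last paragraph of Proposition~\ref{regbiassect} inside the original $D$, the step ``the unstable arcs in $D$ through $x'$ and $y'$ meet $a^s$'' uses regularity of $D$: it fails precisely when $x'$ or $y'$ lands on the unstable spine $S^u$, and nothing in your construction prevents that. One could try to argue this is avoidable for generic choices of $p$ and $k$, but that is additional work you have not supplied. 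The paper's periodic-point argument sidesteps all of this.
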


\begin{proof}
First notice that every regular stable leaf meets twice every unstable leaf.
That is because there are exactly one stable spine and one unstable spine in $D$.
Moreover, both cuts lie in different components of the complement in $D$ of the union of the stable with the unstable leaves.
By Lemma \ref{lps} there is a local product structure around every point in $D$ away from the spines.
Since we are assuming that there are not wandering points we conclude that periodic points are dense
in $D$ (arguing as for Anosov diffeomorphisms).

Take $p\in D$ a periodic point not in a spine.
Denote by $q$ the other point in the intersection of $C^s_D(p)$ with $C^u_s(D)$.
Given $\delta>0$ we can assume that $\dist(f^np,f^nq)\leq\delta$ for all $n\in\Z$.
Since $p$ is periodic we have $k$ such that $f^k(p)=p$. Obviously, $f^{n+jk}(p)=f^{n}(p)$ for all $n,j\in \Z$.
Therefore $$\dist(f^{n+jk}p,f^{n+jk}q)=\dist(f^{n}p,f^{n}(f^{jk}q))\leq\delta$$ for all $j,n\in\Z.$
Then, the points $p$ and $f^{jk}q$, with $j\in\Z$, contradicts the expansiveness of $f$ for the expansive constant $\delta$.
Since $\delta$ is arbitrary we conclude that bi-asymptotic sectors can not exist if $\Omega(f)=M$ and $f$ is 2-expansive.
\end{proof}

\section{Local product structure}

The goal of this section is to prove that except for a finite number of points,
every point $x\in M$ has a neighborhood with local product structure.
To do so, we follow closely \cite[Section 4]{Le}.

Given $x\in M$ and $\delta>0$, recall that $A^s_\delta(x)$ is the  family  of all arcs contained in 
$W^s_\epsilon(x)$ with origin at
$x$ and endpoint at $\partial B_\delta(x)$. 


Let $N^s_\delta(x)$ be the number of equivalence classes in $A^s_\delta(x)$.

\begin{Teo} \label{locprodest}
 If $N^s_\delta(x)\geq 2$ then there is a neighborhood of $x$ such that each $y\neq x$ in that neighborhood has a local product structure.
\end{Teo}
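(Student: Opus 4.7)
The strategy adapts Lewowicz's approach \cite[Section 4]{Le} to the $2$-expansive setting by replacing genuine expansiveness with the absence of bi-asymptotic sectors established in Section \ref{s-nonbiasym}.

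The first step is to assemble a cross of stable and unstable legs at $x$. By Lemma \ref{finitapatas} there are only finitely many equivalence classes in $\mathcal{A}^s(x,\delta)$, and by hypothesis at least two; pick representatives $\beta_1,\dots,\beta_k$ ($k\geq 2$) ordered cyclically around $x$. These $k$ stable arcs split $B(x,\delta)$ into sectors. Under the standing assumption $\Omega(f)=M$ of this section, Section \ref{s-nonbiasym} forbids bi-asymptotic sectors of small diameter, so Lemma \ref{inestsepara} applied inside each sector between consecutive $\beta_i,\beta_{i+1}$ produces an unstable arc $u_i\in\mathcal{A}^u(x,\delta)$ from $x$ to $\partial B(x,\delta)$ contained in that sector. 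Choosing $r>0$ small enough, the union of these $2k$ alternating stable and unstable legs at $x$ partitions $B(x,r)$ into $2k$ open subsectors, each bounded by one stable segment, one unstable segment, and an arc of $\partial B(x,r)$.

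For any $y\in B(x,r)\setminus\{x\}$, let $S$ be the subsector containing it. Proposition \ref{continuos} yields arc-connected continua $D(y)\subset W^s_\epsilon(y)$ and $C(y)\subset W^u_\epsilon(y)$ through $y$ of diameter at least $\delta$. The absence of bi-asymptotic sectors immediately gives $D(y)\cap C(y)=\{y\}$, since a second intersection point would bound a bi-asymptotic disk. The crucial step is to verify that $D(y)$ leaves $S$ through the unstable side of $\partial S$ (or through its $\partial B(x,r)$ arc) and that $C(y)$ leaves through the stable side (or through $\partial B(x,r)$). This amounts to ruling out that $D(y)$ meets the stable leg bounding $S$ at a second point, or that it loops back inside $S$: the first option, combined with $C(y)$ and the stable leg, produces a configuration of three orbits mutually $\alpha$-shadowing and violates $2$-expansiveness, while the second encloses a region of Lyapunov stable points, contradicting Proposition \ref{continuos}.

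With the crossing behavior in hand, I would apply Lemma \ref{lps} as follows. Pick $p_1,p_2$ on $C(y)$ on either side of $y$ inside $S$ and $q_1,q_2$ on $D(y)$ on either side of $y$ inside $S$. Let $\gamma$ be the closed curve obtained by truncating the four arcs $D(p_1)$, $D(p_2)$, $C(q_1)$, $C(q_2)$ at their mutual intersection points (which exist by the crossing analysis). This bounds a small disk $U\ni y$ inside $S$ with alternating stable-unstable-stable-unstable sides. The same crossing argument applied at each $z\in U$ forces $D(z)$ to cross the two unstable sides $C(q_1),C(q_2)$ and $C(z)$ to cross the two stable sides $D(p_1),D(p_2)$, so Lemma \ref{lps} supplies a local product structure at every $z\in U$; since $y\in B(x,r)\setminus\{x\}$ was arbitrary, this proves the theorem. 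The main obstacle is the crossing analysis of the third paragraph: in the expansive case the analogous step is essentially immediate from \cite[Section 4]{Le}, but $2$-expansiveness permits one additional shadowing orbit, so every branch of the case distinction must be redone while jointly invoking continuum-wise expansiveness, the absence of bi-asymptotic sectors, and the precise count allowed by $2$-expansiveness.
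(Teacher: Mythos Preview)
Your overall architecture---assemble alternating stable and unstable legs at $x$ via Lemma~\ref{inestsepara} (using the absence of bi-asymptotic sectors), then invoke Lemma~\ref{lps}---matches the paper's. The gap is in how you verify the hypotheses of Lemma~\ref{lps} near $x$. You attempt to check the crossing behaviour of $D(y)$ and $C(y)$ directly at an arbitrary $y$ in a subsector, and you correctly flag this as ``the main obstacle''. The sketch is genuinely incomplete: your subsector $S$ is bounded by one stable and one unstable leg, and the case you try to exclude (``$D(y)$ meets the stable leg at a second point'') concerns two \emph{stable} arcs meeting, where neither the bi-asymptotic prohibition nor a three-point $2$-expansiveness count applies in the way you indicate. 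The quadrilateral you then build from $D(p_1),D(p_2),C(q_1),C(q_2)$ presupposes exactly the transverse intersections that have not yet been established.

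The paper sidesteps this. It works in the sector $X$ bounded by the two \emph{stable} arcs $a,b$ and a boundary arc $c=c_1\cup c_2\subset\partial B_\delta(x)$, and defines $Q\subset X$ as the set of $y$ admitting a stable arc meeting both $c_1$ and $c_2$ and an unstable arc meeting $c$ and $\partial B_\delta(x)\setminus c$. One checks $x\in Q$ directly from the hypothesis and Lemma~\ref{inestsepara}. The key step is that $Q$ is \emph{open}: if $y\in Q$, any $z$ on $u(y)$ near $y$ satisfies the unstable condition with the same arc, and the stable condition follows by applying Lemma~\ref{inestsepara} in each of the two pieces into which $u(y)$ splits the region; symmetrically for points on $s(y)$. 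Invariance of domain applied to $(z,t)\mapsto s(z)\cap u(t)$ then shows $Q$ is open, and Lemma~\ref{lps} gives the local product structure on $Q$. Thus the crossing property is not verified at each $y$ from scratch but \emph{propagated} outward from $x$ along stable and unstable arcs, with Lemma~\ref{inestsepara} (hence the no-bi-asymptotic-sector hypothesis) doing the work at every step. This propagation device is what your argument is missing.
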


\begin{proof}
Let $a,b\in A^s_\delta(x)$ be two non-equivalent arcs.
Denote by $c$ an arc in $\partial B_\delta(x)$ connecting the end points of $a$ and $b$.
Assume that $c$ do not meet other stable arcs in $A^s_\delta(x)$.
Let $X$ be the sector determined by the arcs $a$ and $b$ with $c$ in its boundary
and denote by
$D$ an unstable arc in $A^u_\delta(x)$ separating $X$.

Let $c_1,c_2$ be arcs contained in $c$ so that $c_1$ begins at the endpoint
of $a$, $c_2$ ends at the endpoint of $b$ and $D\cap(c_1\cup c_2)=\emptyset$.
Let $V$ be an open connected neighborhood of $x$ in $X$ such that
for $y\in V$ the connected component of $C^s_{\delta/2}(y)\cap c$ that contains
$y$ is, in turn, included in $c_1\cup c_2$.
Moreover, we choose $V$ and $c_1,c_2$ such that the $\delta/2$-unstable set
through any point of $V$ does not meet $c$ in points that belong to $c_1\cup c_2$. Let $Q$ be the
subset of $V$ that consist of those $y$ satisfying the following conditions:
\begin{enumerate}
\item\label{condQ1} there is a stable arc $s(y)\subset B_\delta(x)$ that intersects $c_1$ and $c_2$,
\item\label{condQ2} there is an unstable arc $u(y)\subset B_\delta(x)$ that meets $c$ and $\partial B_\delta(x)\setminus c$.
\end{enumerate}

Let us show that $x\in Q$. Notice that condition (\ref{condQ1}) is satisfied by $x$.
In order to prove condition (\ref{condQ2}) notice that $a$ and $b$ are non-equivalent stable arcs and so they determine at least two sectors in $B_\delta(x)$.
Applying Lemma \ref{inestsepara} in both sectors we find an unstable arc satisfying condition (\ref{condQ2}).
Therefore $x\in Q$.

Now we show that $Q$ is open in $V$.
Let $y\in Q$ and consider the unstable arc $u(y)$ given by condition (\ref{condQ2}).
For $z\in V\cap u(y)$ we can consider $u(z)=u(y)$, an unstable arc through $z$ satisfying condition (\ref{condQ2}).
This arc separates $B_\sigma(x)$ in two sectors, so, applying Lemma \ref{inestsepara} on each sector, we have that
there is a stable arc $s(z)$ satisfying condition (\ref{condQ1}). Therefore $z\in Q$.
Similarly, for $t\in V\cap s(y)$ we find $u(t)$ satisfying (\ref{condQ2}), and $t\in Q$.
Consider the function that sends $(z,t)$ to the intersection point $h(z,t)=C^s_{\delta/2}(z)\cap C^u_{\delta/2}(t)$.
This intersection has at most one point by expansiveness, and is non-empty by definition of $Q$. So, $h$ is well defined.
It is easy to see that it is continuous and injective.
Therefore, by invariance of domain, it is an open map.
Then $y$ is in the image of $h$ and $y$ is an interior point of $Q$. This proves that $Q$ is open in $V$.
Lemma \ref{lps} implies $Q$ has a local product structure. Thus, $Q$ is a neighborhood of $x$ in the
sector $X$ with
local product structure.

To obtain
a local product structure for each $y\neq x$ close to $x$ we  notice that every point $y\neq x$ close to $x$ belongs
either to a sector bounded by stable arcs or to a sector bounded by unstable arcs. Repeating the above argument,
a finite number of times (recall Lemma \ref{finitapatas})
we prove that  $y$ admits a local product structure.
\end{proof}

\begin{Prop}[No spines] \label{sinespinas}
 For each $x\in M$ there is $\delta>0$ such that $N^s_\delta(x)\geq 2$.
\end{Prop}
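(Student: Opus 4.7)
My plan is a proof by contradiction. Suppose some $x_0\in M$ is a \emph{stable spine}, meaning $N^s_\delta(x_0)\leq 1$ for all sufficiently small $\delta>0$. Proposition \ref{continuos} supplies a nontrivial arc-connected stable continuum $D(x_0)$ at $x_0$, so in fact $N^s_\delta(x_0)=1$: there is a unique equivalence class of stable arcs at $x_0$, represented by an arm $a$ going from $x_0$ to $\partial B(x_0,\delta)$. Proposition \ref{continuos} also gives a nontrivial unstable arc $b$ emanating from $x_0$.

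First I would use the nonwandering hypothesis $\Omega(f)=M$ to produce a sequence $z_n\to x_0$ with $z_n\neq x_0$ and $f^{k_n}(z_n)\to x_0$ for some $k_n\to\infty$. Because $a$ has empty interior in $B(x_0,\delta)$, I may arrange the $z_n$ to lie in the open set $B(x_0,\delta)\setminus a$. To each $z_n$ I would attach its stable and unstable continua $D(z_n)$, $C(z_n)$, of uniformly large diameter by Proposition \ref{continuos}.

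Next, I would exploit the finiteness of stable and unstable equivalence classes at $x_0$ (Lemma \ref{finitapatas}): in a small neighborhood of $x_0$ there are only finitely many unstable arms from $x_0$ and a single stable arm $a$. As $z_n\to x_0$, the continua $D(z_n)$ converge, along a subsequence, in the Hausdorff metric to a stable continuum through $x_0$; since $x_0$ has a unique stable arm, this limit approaches $x_0$ along $a$ from one side. But $z_n$ lies off $a$, so $D(z_n)$ must bend and return toward $a$. Combined with the returning iterate $f^{k_n}(z_n)\to x_0$, the arcs $D(z_n)$ and $C(z_n)$ (or suitable iterates of them) are forced by planar topology into a disc-like region bounded by a stable subarc and an unstable subarc meeting twice, that is, a bi-asymptotic sector. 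This contradicts the main result of Section \ref{s-nonbiasym}: a 2-expansive homeomorphism with $\Omega(f)=M$ admits no bi-asymptotic sectors. Hence no spines exist, proving $N^s_\delta(x_0)\geq 2$.

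The main obstacle is the planar-topological construction of the bi-asymptotic sector from the spine data and the recurrent orbit. The subtlety lies in locating the second intersection point of an unstable arc with the stable arm $a$: the absence of a matching second stable arm at $x_0$ forces unstable arcs approaching $x_0$ from the ``missing'' side to fold back and touch $a$ twice, producing the sector. This mirrors the reasoning of \cite[Section 4]{Le} used to rule out spines in the expansive case, with the contradictory target now shifted from expansiveness itself to the recently established absence of bi-asymptotic sectors.
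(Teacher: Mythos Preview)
Your proposal identifies the correct contradiction target (bi-asymptotic sectors) but leaves the essential construction unexplained, and the mechanism you suggest via recurrence does not do the work you claim. The step ``$z_n$ lies off $a$, so $D(z_n)$ must bend and return toward $a$'' is not justified: $D(z_n)$ has uniform diameter $\geq\delta$, but nothing prevents it from running roughly parallel to $a$ out to $\partial B(x_0,\delta)$ without ever touching $a$, let alone touching it twice. The Hausdorff limit of $D(z_n)$ being contained in the arm $a$ only says $D(z_n)$ is \emph{near} $a$, not that it intersects $a$. Likewise, the role of the returning iterate $f^{k_n}(z_n)\to x_0$ is never used in any concrete way; you invoke ``planar topology'' to pass from this data to a disc bounded by a stable and an unstable arc, but there is no indicated mechanism producing two intersection points of an unstable arc with a stable arc. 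You acknowledge this as ``the main obstacle,'' and indeed it is the whole proof.

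The paper's argument does not use recurrence at all (beyond the already-established absence of bi-asymptotic sectors). Instead it proceeds in two stages. First it reduces to the case where the local stable set of $x$ is exactly a single arc $a$. Then the key observation is that every \emph{interior} point $w$ of $a$ satisfies $N^s_\sigma(w)\geq 2$ (both halves of $a$ give non-equivalent arms), so Theorem~\ref{locprodest} applies at $w$: nearby points have local product structure, and in particular unstable arcs through interior points of $a$ are transversal to $a$. Now take a thin neighbourhood $\mathcal U$ of $a$; its intersection with $\partial B_\delta(x)$ splits into two subarcs $c_1,c_2$ on either side of the endpoint of $a$. Stable arcs through points $y$ on a small circle around $x$ must stay in $\mathcal U$ and hence hit $c_1$ or $c_2$. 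A connectedness argument on that circle minus one point then forces either a stable arc meeting both $c_1$ and $c_2$, or all such arcs on one side only; in either case a transversal unstable arc (from the product structure along $a$) cuts that stable arc twice, yielding a bi-asymptotic sector. The ingredient you are missing is Theorem~\ref{locprodest} applied along the interior of the spine arc; this is what supplies the transversal unstable arcs needed to close up a sector.
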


\begin{proof}
By contradiction assume that there is $x\in M$ such that for all $\delta>0$ we have that 
there is only one equivalence class $A$ of stable arcs.
Pick a small $\delta$ and let $a\in A$ be a representative of that class.
Let $C\subset B_\delta(x)$ be the maximal stable continuum containing $x$.
If $C$ also contains points other than those in the arc $a$ we can join them to $x$, within $C$, because $C$ is arc-connected.
If all these arcs contain $a$ it is easy to see that for some smaller $\delta$
the stable set $C$ would consist of only one arc joining $x$ to $\partial B_\delta(x)$.

Assume then that there is a point $v\in C\setminus a$
that may be joined to $x$, within $C$, by an arc which does not contain $a$.
Thus, there is a point $u\in a$, $u\neq x$, and an arc $b\subset C$ with origin $u$ and endpoint $v$, whose intersection with $a$ is $\{u\}$.
Let $J$ be a Jordan curve through $x$ 
and $v$ such that $a$ and $b$ lie in its interior except for their endpoints.
Let $w\in a$, $w\neq x$ be the closest point to $x$ such that there is an arc $c\subset C$ with origin
$w$ and endpoint on $\widehat J$, $c\cap a=\{w\}$ where $\widehat J$ is a Jordan curve coinciding with $J$ except in a small neighborhood of $x$ and having $x$ in its interior. This point $w$ has to exist or we will have more than an equivalence class in $A(x,\sigma)$ for some $\sigma>0$.
Consequently, the arc contained in $a$, with origin $x$ and endpoint $w$, belongs (except for $w$)
to the interior of 
a sector bounded by local stable arcs (stable sector for short),
say $X$, defined as previously, with $w$ replacing $x$ and $J$ instead of $\partial B_\delta(x)$.
But on account of the local product structure on a neighborhood of $w$ in $X$, this implies that the stable set of $w$ meets twice some unstable set, which is absurd since there are no bi-asymptotic sectors.
Thus for some $\delta>0$, $C$ consists of an arc $a$ interior to $B_\delta(x)$ except for its endpoint at $\partial B_\delta(x)$.

Now, note that all interior points of the arc $a$ have a local product structure and therefore
their local unstable sets are transversal to $a$.

Let $\mathcal{U}$ be a small neighborhood of the arc $a$.

We can assume that for any point in the interior of the arc $a$ there is an unstable arc $\gamma$ transverse to $a$
and such that the end points 
 of $\gamma$ do not belong to $\mbox{clos}(\mathcal{U})$ where $\mbox{clos}(B)$ stands for the closure of a set $B $.

The intersection $\mathcal{U}\cap \partial B_\delta(x)$ is an arc which is separated by the endpoint $y$ of $a$ in two subarcs $c_1,\,c_2$ so that
  $\mathcal{U}\cap \partial B_\delta(x)\backslash \{y\}= c_1 \cup c_2\cup \{y\}$,
with $c_1\cap c_2=\emptyset$.

Take a small disk $\mathcal{V}$ of $x$ such that for all $y \in
\mathcal{V}$, the stable arc at $y$ is contained in $\mathcal{U}$.
Such a disk exists because otherwise we find, by a limit process, an arc not equivalent to $a$ contradicting our hypothesis.
The boundary of $\mathcal{V}$ is a circle $\mathcal{C}$. Let $\tau$ be the point of intersection of the arc $a$ with $\mathcal{C}$. The set
$\mathcal{C}\backslash \{\tau\}$ is a connected arc.

Define $\cE_1$ (resp. $\cE_2$) be the set of points of $t\in \mathcal{C}\backslash \{\tau\}$ such that its stable arc
 cuts $c_1$ (resp. $c_2$).
We have that $\cE_1$ and $\cE_2$ are
closed sets covering $\mathcal{C}\backslash \{\tau\}$.
If both $\cE_1$ and $\cE_2$ are non empty then since $\mathcal{C}\backslash \{\tau\}$ is connected and $c_1\cup c_2$ is not connected then there exists a local stable arc $S$ of a point at $\mathcal{C}\backslash \{\tau\}$ such that cuts both $c_1$ and $c_2$. But then there is an unstable arc $\gamma$ as above which cuts twice $S$ leading to a bi-asymptotic sector which is a contradiction.

So we can assume that only $\cE_1\neq \emptyset$. In this case we can find a stable arc cutting twice $\mathcal{C}\backslash \{\tau\}$ and choosing an arc $\gamma$ sufficiently near $x$ we find again a bi-asymptotic sector arriving again to a contradiction.
This finishes the proof.

\end{proof}

A point $x \in M$ without local product structure is called a {\em{ a singularity}} of $f$.

\begin{Cor}
\label{c-singularidade}
A $2$-expansive surface homeomorphism on $M$ with $\Omega(f)=M$ has a finite number of singularities.
\end{Cor}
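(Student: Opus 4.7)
The plan is to combine the two preceding results directly. By Proposition \ref{sinespinas}, for every $x\in M$ there exists some $\delta=\delta(x)>0$ such that $N^s_\delta(x)\geq 2$, i.e.\ the local stable set at $x$ has at least two non-equivalent arcs emanating from $x$. With this in hand, Theorem \ref{locprodest} applies at $x$ and furnishes an open neighborhood $V_x$ of $x$ with the property that every point $y\in V_x\setminus\{x\}$ admits a local product structure.

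The crucial consequence I would extract is that the set of singularities
\[
\si(f)=\{x\in M:x\text{ has no local product structure}\}
\]
is a \emph{discrete} subset of $M$: for each $x\in\si(f)$, the neighborhood $V_x$ provided above meets $\si(f)$ only at $x$ itself (points of $V_x\setminus\{x\}$ are non-singular by construction, whether or not they lie in $\si(f)$; but here they do not, so $V_x\cap\si(f)=\{x\}$).

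Since $M$ is a compact surface, any discrete subset of $M$ must be finite: otherwise one could extract from $\si(f)$ an infinite sequence of distinct points, which by compactness would have an accumulation point $x_\infty\in M$, and no neighborhood of $x_\infty$ could then meet $\si(f)$ in only finitely many points, contradicting discreteness (whether $x_\infty\in\si(f)$ or not, its own neighborhood given by Theorem \ref{locprodest} excludes all but possibly $x_\infty$ from $\si(f)$). Therefore $\si(f)$ is finite, which is the claim.

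There is no real obstacle here: the work has been done in Proposition \ref{sinespinas} and Theorem \ref{locprodest}, and the Corollary is essentially just the observation that ``every point is isolated in $\si(f)$'' together with compactness of $M$. The only mild subtlety worth spelling out is the discreteness step, since one must note that the neighborhood $V_x$ produced by Theorem \ref{locprodest} gives the local product structure at all points \emph{other than $x$}, which is exactly what is needed to isolate singularities from one another.
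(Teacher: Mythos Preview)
Your proof is correct and follows essentially the same approach as the paper: both combine Proposition~\ref{sinespinas} (every point has at least two stable branches) with Theorem~\ref{locprodest} (local product structure around every nearby point) to conclude that singularities are isolated, and then invoke compactness of $M$. The paper phrases the conclusion as ``the set with local product structure is open, so singularities are closed and isolated,'' while you argue directly via discreteness and a sequential compactness argument; the content is the same.
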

\begin{proof}
The set of points where there is a local product structure is open. So the set of singularities is closed and
singularities are isolated by the previous propositions.
\end{proof}
%

\section{Expansiveness}
\label{s-expansive}

In this section we prove Theorem \ref{Teo A}, that is, we show that if $\Omega(f)=M$ and $f$ is 2-expansive then $f$ is in fact expansive.

First note that 
that every point $x\in M$ that is not a singularity has a neighborhood with local structure product.
We call such a neighborhood a {\em{box with local product structure} } that we refer to as a box, for short.

When $x$ is a singularity, from the proof of
Theorem \ref{locprodest} and Proposition \ref{sinespinas}
it follows that  $x$ has a neighborhood
 $\mathcal B$ such that its local stable (unstable) set consist of the union of $r\geq 3$ arcs
 that meet only at $x$.
Moreover, given any  unstable arc $\gamma^u$ in  $\mathcal B$ there is a stable arc $\gamma^s$
 through $x$  that intersects $\gamma^u$ only once, which implies that
$x$ is dynamically  isolated, that is,
 it coincides with the maximal invariant set in $\mathcal B$.
 We denote such a neighborhood  $\mathcal B$ a {\em{generalized box at $x$}}.
 Figure \ref{figSingPA} displays the main features of a generalized box.

\begin{figure}[h]
\begin{center}
\includegraphics[scale=0.50]{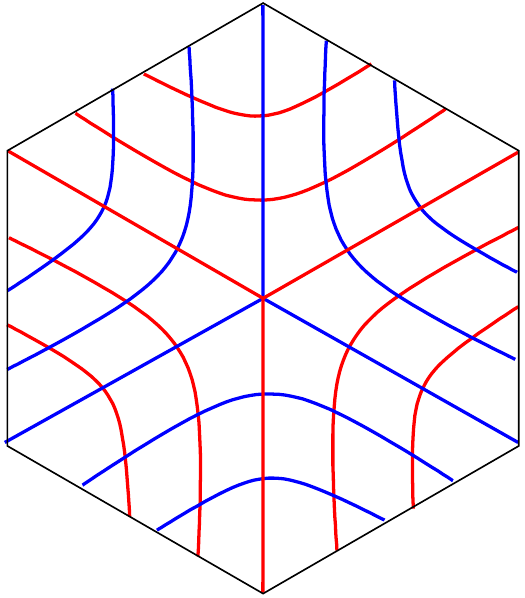}
\end{center}
\caption{A generalized box.}
\label{figSingPA}
\end{figure}

\begin{proof}[Proof of Theorem \ref{Teo A}]
Take a finite number of discs $D_1,\dots,D_n$ covering the surface
such that there is no bi-asymptotic sector in $D_j$ for all $j=1,\dots,n$.
Let $\delta>0$ be such that if the diameter of a set $X$ in the surface is smaller than $\delta$ then
$X$ is contained in some disc of the covering, and
fix $\sigma>0$ such that if $\diam(X)<\sigma$ then $\max\{\diam(f(X)),\diam(f^{-1}(X))\}<\delta$.
Take a finite covering $A_1,\dots,A_m$ of open sets
such that $\diam(A_i)<\sigma$ for all $i=1,\dots,m$.
Take $\alpha>0$ as a Lebesgue nomber of the covering $A_1,\dots,A_m$. Hence if $\dist(x,y)<\alpha$ then $x,y$ are in some $A_i$.
We assume, without loss, that each  $A_i$ has a local structure product or it is a generalized box,
as explained above.

Suppose by contradiction that $f$ is not $\alpha$-expansive and take $x\neq y$ such that
$$\dist(f^n(x),f^n(y))<\alpha,\quad \mbox{for all } n\in \Z .$$
Since each singularity is {\em dynamically isolated},
eventually changing $x,y$ for some of its iterates, we can assume that both $x$ and $y$ are contained, for some $k$,
in the same $A_k$ with product structure.

Suppose first the $x,y$ are neither in a stable arc of $A_k$ nor in an unstable arc of $A_k$.
Since $A_k$ has a product structure, the stable arc $s(x)$ at $x$ cuts the unstable arc $u(y)$ at $y$ in some point $z$ different from both $x$ and $y$.
The three points $x,y,z$ contradicts 2-expansiveness.

Now suppose that $x,y$ are in an unstable arc contained in $A_k$. 
Denote by $u$ this unstable arc. The diameter of $u$ is smaller than $\sigma$ and since it is unstable, there is a first $n_0>0$ such that
$\diam(f^{n_0}(u))>\sigma$ while the distance between $x'=f^{n_0}(x)$ and $y'=f^{n_0}(y)$ is less than $\alpha$. Let $u'=f^{n_0}(u)$, we have that $\diam(u')<\delta$. Then $u'$ is contained in some disc $D$ of the first considered
covering.
We have that $x'$ and $y'$ are in a generalized box $A'$ contained in $D$.
Therefore, $x'$ and $y'$ are joined by the unstable arc $u'\subset D$ and also, inside the $A'$, by an arc of type $stable-unstable$ (if there is in  $A'$ a local product structure) or by an arc of type $stable - unstable-stable$ if $A'$ is a generalized box as in Figure \ref{figSingPA}.
In any case it gives us a bi-asymptotic sector inside $D$ which is a contradiction.
\end{proof}

\section{Proof of Theorem \ref{Teo B}}
\label{sectionteoB}

In this section we prove Theorem \ref{Teo B}. All that what  follows is the description of an
example of a surface $2$-expansive homeomorphism with wandering points that is not expansive. 
This example were first considered by Alfonso Artigue, Joaquin Brum and Rafael Potrie, during a seminar course delivered by Jorge Lewowicz in Montevideo, Uruguay. 
 
The construction of this example  is based on the construction  of a quasi-Anosov diffeomorphism given in \cite{FR}.

Consider $S_1$ and $S_2$ two disjoint copies of the torus $\R^2/\Z^2$.
Let $f_i\colon S_i\to S_i$ be two diffeomorphisms such that:
\begin{itemize}
\item $f_1$ is a derived-from-Anosov (see for example \cite{Robinson} Section 7.8 for a construction of such a map),
\item $f_2$ is conjugated with $f_1^{-1}$,
\item $f_i$ has a fixed point $p_i$, $p_1$ is a source and $p_2$ is a sink,
\end{itemize}

Also assume that there are local charts $\varphi_i\colon D\to S_i$, $D=\{x\in\R^2:\|x\|\leq 2\}$,
such that
\begin{enumerate}
 \item $\varphi_i(0)=p_i$,
\item the pull-back of the stable (unstable) foliation by $\varphi_1$ ($\varphi_2$)
is the vertical (horizontal) foliation on $D$ and
\item $\varphi_1^{-1}\circ f^{-1}_1\circ \varphi_1(x)=\varphi_2^{-1}\circ f_2\circ \varphi_2(x)=x/4$
for all $x\in D$.
\end{enumerate}

Let $A$ be the annulus $\{x\in\R^2:1/2\leq \|x\|\leq 2\}$
and $\psi\colon\R^2\to \R^2$ the inversion $\psi(x)=x/\|x\|^2$.
Consider $\hat D$ the open disk $\{x\in\R^2:\|x\|<1/2\}$.
On $[S_1 \setminus \varphi_1(\hat D)]\cup [S_1\setminus \varphi_2(\hat D)]$
consider the equivalence relation generated by
\[
 \varphi_1(x)\sim \varphi_2\circ\psi (x)
\]
for all $x\in A$. Denote by $\overline x$ the equivalence class of $x$.
The surface
\[
S= \frac{[S_1 \setminus \varphi_1(\hat D)]\cup [S_1\setminus \varphi_2(\hat D)]}\sim
\]
is a bitorus with the quotient topology.
The stable and unstable foliations are illustrated in Figure \ref{fig}.
\begin{figure}[h]
\begin{center}
\includegraphics[scale=.9]{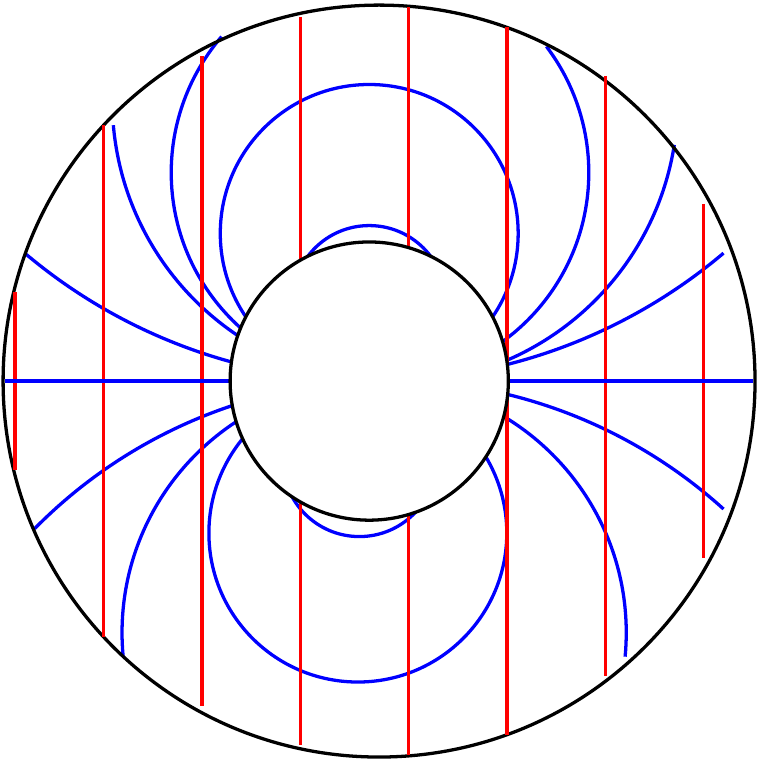}
\caption{Foliations in the annulus $\overline A$.
Blue lines represent the unstable foliation (after the inversion) and the red lines are the stable foliation.}
\label{fig}
\end{center}
\end{figure}

Consider the homeomorphism $f\colon S\to S$ defined by
\[
 f(\overline x)=\left\{
\begin{array}{ll}
\overline{f_1(x)} &\hbox{ if } x\in S_1 \setminus \varphi_1(\hat D)\\
\overline{f_2(x)} &\hbox{ if } x\in S_2 \setminus \varphi_2(D)\\
\end{array}
\right.
\]

\begin{Prop}
 The homeomorphism $f$ is 2-expansive but it is not expansive.
\end{Prop}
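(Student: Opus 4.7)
The strategy is to analyze the local stable and unstable foliations of $f$ on the gluing annulus and to exploit the geometry produced by the inversion $\psi$. First I would describe these foliations in the chart $\varphi_1$. On the $S_1$ side, $f$ acts as $f_1$, so its local stable leaves are vertical lines by hypothesis. Its local unstable leaves come from the $S_2$ side through the identification: they are $\psi$-images of horizontal lines in $\varphi_2$ (the unstable foliation of $f_2$). A direct computation shows that $\psi$ sends the horizontal line $\{y=c\}$, $c\neq 0$, to the circle $u^2+v^2=v/c$, i.e.\ a circle through the origin tangent to the $x$-axis. Thus in the annulus the unstable foliation of $f$ appears in $\varphi_1$ coordinates as a one-parameter family of circles tangent to the $x$-axis, transverse to the vertical stable foliation except at isolated tangencies. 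A symmetric picture holds in $\varphi_2$.

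To prove that $f$ is not expansive I would use the elementary fact that a vertical line $\{x=x_0\}$ meets the tangent circle $u^2+v^2=v/c$ in two distinct points whenever $0<|x_0|<1/(2|c|)$. Fix such a pair $\{p,p'\}$ in the annulus; by construction they lie on a common local stable leaf and a common local unstable leaf of $f$. Hence the forward orbits of $p$ and $p'$ are contracted together along the shared stable leaf, and the backward orbits are contracted together along the shared unstable leaf. Taking $|c|$ and $|x_0|$ small makes $d(p,p')$ arbitrarily small, so for any prescribed $\alpha>0$ we obtain $p\neq p'$ with $d(f^np,f^np')\leq \alpha$ for every $n\in\Z$, ruling out expansiveness.

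To prove 2-expansiveness I would fix $\alpha>0$ smaller than expansive constants for the hyperbolic pieces of $f_1$ and $f_2$ and smaller than the size of the local product charts. Let $x\in S$ and suppose $y\in\Gamma_\alpha(x)\setminus\{x\}$. Then $y$ must lie both in the local stable set and the local unstable set of $x$ at every time. If the full orbit of $x$ avoids the annulus it stays in the hyperbolic pieces, where stable and unstable manifolds are transverse and meet only at $x$, forcing $\Gamma_\alpha(x)=\{x\}$. Otherwise, pick a time $n_0$ for which $f^{n_0}(x)$ lies in the annulus; at that instant the local stable set is a vertical line and the local unstable set is a tangent circle as above, and these intersect in at most two points. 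Therefore $\sharp\Gamma_\alpha(x)\leq 2$.

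The main obstacle will be the uniform distance control in step two: one must verify that the two candidate orbits remain within the global constant $\alpha$ not only asymptotically but throughout the traverse of the annulus. The key ingredients are the explicit form $x\mapsto 4x$ (respectively $x\mapsto x/4$) of $f$ near $p_1$ (respectively $p_2$), which controls the pair inside the charts, the fact that while $p$ and $p'$ remain in the annulus they stay at a single pair of intersection points of a vertical line with a tangent circle whose separation is controlled by the initial separation, and the contraction of $f_1$ on stable leaves in $S_1\setminus\varphi_1(D)$ and of $f_2^{-1}$ on unstable leaves in $S_2\setminus\varphi_2(D)$, which together propagate the smallness of the separation to all $n\in\Z$.
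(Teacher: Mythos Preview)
Your argument for 2-expansiveness is essentially the paper's: the nonwandering set is hyperbolic and hence expansive, while any wandering orbit passes through the gluing annulus $\overline A$, where the vertical stable leaf meets the circular unstable leaf in at most two points, bounding $\sharp\Gamma_\alpha(x)$ by $2$.

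For non-expansiveness the two arguments genuinely differ. The paper dispatches this in one line: $\Omega(f)\neq S$, and by the Lewowicz--Hiraide classification an expansive surface homeomorphism is conjugate to a pseudo-Anosov map, hence has nonwandering set equal to the whole surface. You instead give a self-contained construction, explicitly producing pairs $p\neq p'$ on a common vertical line and a common tangent circle. This avoids invoking the classification theorem, which is a real gain in elementarity, but it costs you the work you yourself flag: the vertical foliation is \emph{not} forward-contracted inside $\varphi_1(D)$ (indeed $f_1$ acts there as $x\mapsto 4x$, a pure expansion), so ``same stable leaf'' does not automatically give $p'\in W^s_\alpha(p)$. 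Your sketch of the fix is correct in outline---choose $p,p'$ close enough that one forward step already lands them outside $\varphi_1(D)$, where the DA modification is absent and the vertical direction is genuinely contracted; continuity of $f_1$ then controls that single transitional step, and the symmetric argument handles the backward orbit via $\varphi_2$---but it should be stated rather than left as ``the key ingredients are.'' One clean way to organize it: show there is a constant $C$ such that for any such pair in the annulus $\sup_{n\in\Z}d(f^np,f^np')\le C\,d(p,p')$, then take $d(p,p')<\alpha/C$.
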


\begin{proof}
 It is not expansive because $\Omega f\neq S$.

To show that it is 2-expansive notice that
\begin{itemize}
 \item $\Omega f$ is expansive (because it is hyperbolic) and
\item $\Omega f$ is isolated, i.e. there is an open set $U$ such that $\Omega f=\cap_{n\in\Z} f^nU$.
\end{itemize}
So, it only rest to show that there is $\delta>0$ such that if $X\cap \Omega f=\emptyset$
and $\diam f^nX<\delta$ for all $n\in \Z$ then $|X|<3$.
Let $\overline A=\{\overline x:x\in \varphi_1(A)\}$.
Let $\delta>0$ be such that
$B_\delta (x)\subset f^{-1}\overline A\cup \overline A\cup f\overline A$
for all $x\in\overline A$.
By construction, we have that $W^s_\delta(x)\cap W^u_\delta(x)$ has at most two points if $x\in\overline A$.
Notice that for all $x\notin\Omega f$ there is $n\in\Z$ such that $f^nx\in \overline A$.
This finishes the proof.
\end{proof}

\noindent
{\em  M. J. Pacifico}:
Instituto de Matem\'atica,
Universidade Federal do Rio de Janeiro,
C. P. 68.530, CEP 21.945-970,
Rio de Janeiro, RJ, Brazil.

E-mail: pacifico@im.ufrj.br .\\

\noindent
{\em A. Artigue and J. Vieitez}:
Regional Norte, Universidad de la Republica,
Rivera 1350, CP 50000, Salto, Uruguay.

E-mail: artigue@unorte.edu.uy

E-mail: jvieitez@unorte.edu.uy.
\end{document}